\theoremstyle{plain}
\newtheorem{theorem}{Theorem}[section]
\newtheorem{proposition}[theorem]{Proposition}
\newtheorem{corollary}[theorem]{Corollary}
\newtheorem{def-thm}[theorem]{Definition-Theorem}
\newtheorem{lemma}[theorem]{Lemma}
\theoremstyle{definition}
\newtheorem{remark}[theorem]{Remark}
\newtheorem{conjecture}[theorem]{Conjecture}
\newtheorem*{acknowledgement}{Acknowledgement}
\newcommand{\PP}{\mathbb{P}}
\newcommand{\RR}{\mathbb{R}}
\newcommand{\CC}{\mathbb{C}}
\newcommand{\QQ}{\mathbb{Q}}
\newcommand{\OO}{{\mathcal O}}
\newcommand{\dra}{\dashrightarrow}
\DeclareMathOperator{\Ric}{Ric}
\DeclareMathOperator{\kod}{kod}
\begin{document}

\title[Semi-negative holomorphic sectional curvature]{K\"ahler manifolds of semi-negative\\ holomorphic sectional curvature}

\begin{abstract} 
In an earlier work, we investigated some consequences of the existence of a K\"ahler metric of negative holomorphic sectional curvature on a projective manifold. In the present work, we extend our results to the case of semi-negative (i.e., non-positive) holomorphic sectional curvature. In doing so, we define a new invariant that records the largest codimension of maximal subspaces in the tangent spaces on which the holomorphic sectional curvature vanishes. Using this invariant, we establish lower bounds for the nef dimension and, under certain additional assumptions, for the Kodaira dimension of the manifold. In dimension two, a precise structure theorem is obtained.
\end{abstract}

\author{Gordon Heier, Steven S.~Y.~Lu, Bun Wong}

\address{Department of Mathematics\\University of Houston\\4800 Calhoun Road, Houston, TX 77204\\ USA\\ }
\email{heier@math.uh.edu}

\address{D\'epartment de Math\'ematiques\\Universit\'e du Qu\'ebec \`a Montr\'eal\\C.P. 8888\\Succursale Centre-Ville\\Montr\'eal, Qc H3C 3P8\\ Canada\\ }
\email{lu.steven@uqam.ca}

\address{Department of Mathematics\\UC Riverside\\900 University Avenue\\ Riverside, CA 92521\\ USA\\ }
\email{wong@math.ucr.edu}

\subjclass[2010]{14C20, 14E05, 32J27, 32Q05, 32Q45}

\thanks{The first named author is partially supported by the National Security Agency under Grant Number H98230-12-1-0235. The United States Government is authorized to reproduce and distribute reprints notwithstanding any copyright notation herein.}

\maketitle

\section{Introduction} 

One of the most basic questions posed by S.-T.~Yau concerning the geometry of a projective (or compact) K\"ahler manifold is the relationship between its holomorphic sectional curvature and its Ricci curvature. The former plays a key role in classical complex geometry (recall for example the constant holomorphic sectional curvature characterization of quotients of ${\mathbb B}^n$,
${\mathbb C}^n$ and ${\mathbb P}^n)$, while the latter
is the keystone of the modern theory of (projective) K\"ahler manifolds. Although it is known that the holomorphic sectional curvature completely determines the curvature tensor, there is no direct local link between its sign and that of the Ricci curvature. In this paper, we provide results in this direction for projective K\"ahler manifolds of semi-negative (i.e., non-positive) holomorphic sectional curvature by analyzing the structural implications of the curvature assumption with the help of a new invariant that records the largest codimension of maximal subspaces in the tangent spaces on which the holomorphic sectional curvature vanishes.\par

In our previous paper \cite{heier_lu_wong_mrl}, we investigated the implications of negative holomorphic sectional curvature for the positivity of the canonical line bundle $K_M$ on a projective K\"ahler manifold $M$. Our main results in that paper can be summed up in the following theorem. In Section \ref{sec_def} below, the reader will find the definitions of the notions involved. We shall always work over the field of complex numbers.
\begin{theorem}[\cite{heier_lu_wong_mrl}]\label{mrl_theorem}
Let $M$ be a projective manifold with a K\"ahler metric of negative holomorphic sectional curvature. Then
\begin{enumerate}
\item the numerical dimension of $M$ is positive, and
\item the nef dimension of $M$ is equal to the dimension of $M$.
\end{enumerate}
\end{theorem}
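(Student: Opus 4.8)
The plan is to establish the two assertions in turn, with the second resting on the first, and in both to exploit the observation that on a \emph{compact} manifold a negative holomorphic sectional curvature is automatically bounded above by a negative constant $-\kappa$.

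\emph{Step 1: $K_M$ is nef.} I would first show that $M$ contains no rational curves. A rational curve in $M$ would give a nonconstant holomorphic map $\PP^1\to M$, hence, restricting to $\PP^1\setminus\{\text{pt}\}\cong\CC$, a nonconstant holomorphic map $f\colon\CC\to M$. Pulling back $\omega$ and comparing $f^*\omega$ with the Poincar\'e metric on disks of radius $R\to\infty$ via the Ahlfors--Schwarz lemma (using the curvature bound $\le-\kappa$) forces $f^*\omega\equiv 0$, so $f$ is constant, a contradiction. Since $M$ is projective and carries no rational curves, Mori's cone theorem forces $K_M$ to be nef, because every $K_M$-negative extremal ray of $\overline{NE}(M)$ would be spanned by a rational curve.

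\emph{Step 2: the numerical dimension of $M$ is positive.} As $K_M$ is now nef, $\nu(M):=\nu(K_M)$ is defined, and $\nu(M)\ge 1$ amounts to $c_1(K_M)\ne 0$ in $H^2(M,\RR)$. For this I would use the elementary fact that the scalar curvature of a K\"ahler metric is, pointwise, a positive multiple of the mean of the holomorphic sectional curvature over the unit tangent sphere; hence the scalar curvature of $\omega$ is everywhere negative. By Chern--Weil, $\int_M(\text{scalar curvature})\,\omega^n$ is a positive multiple of $\int_M\Ric(\omega)\wedge\omega^{n-1}=c_1(M)\cdot[\omega]^{n-1}$, so $K_M\cdot[\omega]^{n-1}=-c_1(M)\cdot[\omega]^{n-1}>0$. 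In particular $c_1(K_M)\ne 0$, so $\nu(M)\ge 1$, which is (i).

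\emph{Step 3: the nef dimension of $M$ equals $\dim M$.} Let $f\colon M\dra Y$ be the nef reduction of the nef line bundle $K_M$, so that $\dim Y$ is by definition the nef dimension of $M$ and $K_M$ is numerically trivial on the general fiber. Assume for contradiction that $\dim Y<\dim M$. After shrinking $Y$ to a dense open subset over which $f$ is a proper smooth morphism, a general fiber $F$ is a positive-dimensional connected compact complex submanifold of $M$ with trivial normal bundle, so adjunction yields $K_F\cong K_M|_F$, which is numerically trivial on $F$. On the other hand, by the Gauss equation the holomorphic sectional curvature of the induced K\"ahler metric $\omega|_F$ is bounded above by that of $\omega$, hence by $-\kappa<0$; running the computation of Step 2 on $(F,\omega|_F)$ then gives $K_F\cdot[\omega|_F]^{\dim F-1}>0$, contradicting $K_F\equiv 0$. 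Therefore $\dim Y=\dim M$, which is (ii).

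The conceptual engine is the chain ``negative holomorphic sectional curvature $\Rightarrow$ no rational curves $\Rightarrow$ $K_M$ nef,'' together with the self-improving feature that the curvature hypothesis passes to complex submanifolds, the numerical positivity being read off from the scalar curvature. I expect the only delicate point to be the bookkeeping in Step 3: one must extract from the a priori merely almost-holomorphic rational map $f$ a genuine smooth compact submanifold $F\subset M$ with numerically trivial normal bundle, so that adjunction really forces $K_F\equiv 0$ and the Gauss equation applies as stated. The remaining ingredients---the Ahlfors--Schwarz lemma, Mori's cone theorem, the averaging formula for scalar curvature, and the existence of the nef reduction---enter as black boxes.
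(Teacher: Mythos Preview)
Your proposal is correct and follows essentially the same strategy as the paper. Note that Theorem~\ref{mrl_theorem} is stated here as a citation from \cite{heier_lu_wong_mrl} rather than reproved, but the paper's proof of the more general Theorem~\ref{mthm} in Section~\ref{sec_proof} specializes (with $r_M=n$) to exactly your argument: Berger's averaging formula yields negative scalar curvature, the integral $\int_M c_1(-K_M)\wedge\omega^{n-1}=\frac{2}{n}\int_M S\,\omega^n$ forces $\nu(M)\geq 1$ (packaged as Proposition~\ref{incompatibility_thm}), and the nef-dimension step proceeds by adjunction $K_F=K_M|_F$ on a compact general fiber of the nef reduction together with the curvature-decreasing property, reaching the same contradiction. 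The only cosmetic difference is in Step~1: you invoke the Ahlfors--Schwarz lemma on expanding disks to rule out entire curves (which the paper's introduction also mentions), whereas Section~\ref{nefness} uses a compact-Riemann-surface Schwarz lemma (Lemma~\ref{Shwarz}) that also covers the semi-negative case; in the strictly negative setting both arguments are equivalent. Your acknowledged ``delicate point''---extracting a genuine compact smooth fiber from the almost holomorphic nef reduction---is handled in Section~\ref{nefd} exactly as you anticipate, by restricting to the open set where $f$ is a smooth submersion.
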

It follows from a generalized Schwarz Lemma due to Ahlfors that on a compact hermitian manifold $M$ with negative holomorphic sectional curvature there exists no non-constant holomorphic map from the complex plane into $M$ (i.e., $M$ is Brody hyperbolic). In particular, there exist no rational curves on $M$. It thus follows from Mori's bend and break technique that a projective manifold with a K\"ahler metric of negative holomorphic sectional curvature has nef canonical line bundle. \par
Furthermore, it is a consequence of the Abundance Conjecture that for a projective manifold with nef canonical bundle, the Kodaira dimension equals the nef dimension (see the discussion at the beginning of Section \ref{abund_sec}). Thus, under the assumption of the Abundance Conjecture, Theorem \ref{mrl_theorem} implies that $M$ is of general type, i.e., its canonical bundle is big. Additionally, due to \cite{Kawamata_85}, a Brody hyperbolic projective manifold (or even one that is merely free of rational curves) has ample canonical  bundle if it is of general type. Thus, up to the validity of the Abundance Conjecture, our work in \cite{heier_lu_wong_mrl} proves the following conjecture, which the third named author learnt from S.-T.~Yau in personal conversations in the early 1970s.
\begin{conjecture}\label{yau_conj}
Let $M$ be a projective manifold with a K\"ahler metric of negative holomorphic sectional curvature. Then its canonical line bundle $K_M$ is ample.
\end{conjecture}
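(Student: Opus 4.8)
The plan is to derive the ampleness of $K_M$ by combining the two conclusions of Theorem \ref{mrl_theorem} with classical hyperbolicity estimates, Mori theory, the Abundance Conjecture, and a theorem of Kawamata; as the phrasing ``up to the validity of the Abundance Conjecture'' already signals, the argument will be conditional. First I would recall the generalized Schwarz Lemma of Ahlfors: on a compact Hermitian manifold of negative holomorphic sectional curvature every holomorphic map from $\CC$ is constant, so $M$ is Brody hyperbolic and in particular contains no rational curves. By Mori's bend-and-break mechanism, the non-existence of rational curves rules out $K_M$-negative extremal rays, hence $K_M$ is nef.

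At this point Theorem \ref{mrl_theorem}(ii) applies: the nef dimension of $M$ equals $\dim M$. Assuming the Abundance Conjecture, a projective manifold with $K_M$ nef has $K_M$ semi-ample, and the Iitaka fibration of $K_M$ can then be identified with its nef reduction, so that the Kodaira dimension of $M$ coincides with its nef dimension; therefore $\kappa(M) = \dim M$, i.e., $M$ is of general type and $K_M$ is big. Finally, invoking Kawamata's theorem from \cite{Kawamata_85} --- a projective manifold of general type without rational curves has ample canonical bundle --- and using again that $M$ carries no rational curves, one concludes that $K_M$ is ample.

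The main obstacle is exactly the middle step: the passage from ``nef dimension equals $\dim M$'' to ``Kodaira dimension equals $\dim M$'' is not available unconditionally, which is why the conjecture remains open in general. Removing the dependence on abundance would require producing pluricanonical sections directly --- showing that $h^0(M, mK_M)$ grows like $m^{\dim M}$ --- presumably by exploiting the curvature hypothesis more quantitatively than the Schwarz-Lemma and bend-and-break circle of ideas allows. This is a genuinely analytic input that Theorem \ref{mrl_theorem} does not supply, and it is the crux of Yau's conjecture; the present paper's contribution in this direction (the new vanishing-codimension invariant and the resulting lower bounds on nef and Kodaira dimension) should be read as partial progress toward such an unconditional statement rather than a complete resolution.
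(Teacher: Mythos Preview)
Your proposal is correct as a conditional argument and follows essentially the same route as the paper: the text immediately preceding Conjecture~\ref{yau_conj} and the discussion in Section~\ref{abund_sec} derive ampleness of $K_M$ from Theorem~\ref{mrl_theorem}(ii) via exactly the chain Ahlfors--Schwarz $\Rightarrow$ no rational curves $\Rightarrow$ $K_M$ nef (Mori), then Abundance $\Rightarrow$ $\kod(M)=n(M)=\dim M$, then Kawamata \cite{Kawamata_85} $\Rightarrow$ $K_M$ ample. You also correctly identify the conditional nature of the argument and the genuine obstruction, which is precisely why the paper states this as a conjecture rather than a theorem (and notes only the case $\dim M\le 3$ as unconditionally established).
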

Since the Abundance Conjecture in dimension three is known by the works of Miyaoka and Kawamata (see \cite[Lecture IV]{Miyaoka_Peternell_book} for a nice account), our previous work in particular establishes the three dimensional case of Conjecture \ref{yau_conj} (\cite[Theorem 1.1]{heier_lu_wong_mrl}).\par
After the publication of our paper \cite{heier_lu_wong_mrl}, another paper on this topic appeared, namely \cite{wong_wu_yau}. Its main result is as follows.
\begin{theorem}[\cite{wong_wu_yau}]\label{wong_wu_yau_thm}
Let $M$ be a projective manifold of Picard number one. If $M$ admits a K\"ahler metric whose holomorphic sectional curvature is semi-negative everywhere and strictly negative at some point of $M$, then the canonical line  bundle of $M$ is ample.
\end{theorem}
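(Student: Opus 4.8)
The plan is to reduce the statement to the positivity of a single intersection number, which the Picard number one hypothesis then converts directly into ampleness. First, I would invoke the classical averaging formula for the holomorphic sectional curvature (going back to Berger): at each point $p\in M$, the average of $H(p,\cdot)$ over the unit sphere in $T_pM$ equals a fixed positive multiple of the scalar curvature $\rho_\omega(p)$ of the metric $\omega$ at $p$; concretely, in normal coordinates the average is $\tfrac{1}{n(n+1)}(R_{i\bar ik\bar k}+R_{i\bar kk\bar i})$, and a standard contraction using the K\"ahler symmetries shows both sums equal the scalar curvature. Hence semi-negativity of $H$ forces $\rho_\omega\le 0$ everywhere on $M$, while strict negativity of $H$ at the distinguished point $p_0$ forces $\rho_\omega(p_0)<0$, so by continuity $\rho_\omega<0$ on a neighborhood of $p_0$. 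Therefore $\int_M\rho_\omega\,\omega^n<0$.

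Next I would pass to cohomology. Since $\int_M\rho_\omega\,\omega^n$ is a positive multiple of $\int_M\Ric(\omega)\wedge\omega^{n-1}$, which in turn represents a positive multiple of $c_1(M)\cdot[\omega]^{n-1}$, the previous step gives $c_1(M)\cdot[\omega]^{n-1}<0$, i.e. $c_1(K_M)\cdot[\omega]^{n-1}>0$. In particular $c_1(K_M)\neq 0$ in $N^1(M)_{\RR}$.

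Now the Picard number one hypothesis closes the argument cheaply: $N^1(M)_{\RR}$ is spanned by a single ample class $\xi$ with $\xi^n>0$, and writing $c_1(K_M)=\ell\xi$ and $[\omega]=a\xi$ with $a>0$, the inequality $c_1(K_M)\cdot[\omega]^{n-1}=\ell\,a^{n-1}\,\xi^n>0$ forces $\ell>0$. Thus $c_1(K_M)$ lies in the (open) ample cone, and since ampleness of a line bundle depends only on its numerical class (Nakai--Moishezon, or Kleiman's criterion), $K_M$ is ample. One could alternatively first note that $M$ carries no rational curves --- restricting $\omega$ to the normalization of a hypothetical rational curve produces a metric on $\PP^1$ whose Gauss curvature is $\le H\le 0$, contradicting Gauss--Bonnet / the Ahlfors--Schwarz lemma --- so that $K_M$ is nef by Mori's cone theorem, and then conclude as above; but the intersection-number argument already suffices.

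The only delicate points in this restricted statement are bookkeeping: the exact constant in the averaging formula and the correct normalization relating $\int_M\rho_\omega\,\omega^n$ to $c_1(K_M)\cdot[\omega]^{n-1}$ --- reassuringly, only their signs are needed. The genuine obstacle is removing the Picard number one assumption: without it, $c_1(K_M)\cdot[\omega]^{n-1}>0$ implies neither ampleness nor even nefness of $K_M$, and one must instead study the locus where the holomorphic sectional curvature degenerates --- which is exactly the program carried out in the body of this paper and, in the strictly negative case, in \cite{heier_lu_wong_mrl}.
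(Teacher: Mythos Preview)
Your argument is correct and rests on the same analytic input as the paper's: Berger's averaging formula gives $S_\omega\le 0$ with strict inequality somewhere, whence $\int_M \Ric(\omega)\wedge\omega^{n-1}<0$, i.e., $c_1(K_M)\cdot[\omega]^{n-1}>0$; the Picard number one hypothesis then forces $K_M$ to be a positive multiple of an ample class. The paper organizes the same ingredients slightly differently: it first proves in general (Theorem~\ref{mthm}) that $K_M$ is nef (via the absence of rational curves and Mori) and has positive numerical dimension (via exactly your Berger integral, packaged as Proposition~\ref{incompatibility_thm}), and then applies the short lemma ``nef $+$ $\nu>0$ $+$ Picard number one $\Rightarrow$ ample.'' Your route is a touch more direct for this specific statement since it bypasses nefness; the paper's detour is the price of setting up the general framework (arbitrary Picard number, the invariant $r_M$) of Theorem~\ref{mthm}.

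One imprecision to fix: you write $[\omega]=a\xi$ with $\xi$ a generator of $N^1(M)_\RR$, but Picard number one only says $\dim N^1(M)_\RR=1$, not $\dim H^{1,1}(M,\RR)=1$, so a K\"ahler class need not lie in $N^1(M)_\RR$. The repair is immediate: you only need $c_1(K_M)=\ell\xi$ (which is fine, since $K_M$ is an honest line bundle) together with $\xi\cdot[\omega]^{n-1}>0$, which holds because $\xi$ is ample and $\omega$ is K\"ahler; then $\ell>0$ follows as you say.
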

The proof of this theorem as given in \cite{wong_wu_yau} is based on a refined Schwarz Lemma. The purpose of the present work is to treat the case of semi-negative holomorphic sectional curvature more comprehensively and in line with our earlier approach, but with
an integrated form of the Schwarz Lemma (Proposition \ref{incompatibility_thm}, see also Proposition \ref{prop_tot_geod}). In particular, we recover the above Theorem \ref{wong_wu_yau_thm}. \par

To state our first result, we make the following definitions.  For $p\in M$, let $\eta(p)$ be the maximum of those integers $k\in \{0,\ldots,n:=\dim M\}$ such that there exists a $k$-dimensional subspace $L\subset T_p M$ with $H(v)=0$ for all $v\in L\backslash \{\vec{0}\}$. Set $\eta_M:=\min_{p\in M} \eta(p)$ and $r_M:=n-\eta_M.$ Note that by definition $r_M=0$ if and only if $H$ vanishes identically. Also, $r_M=\dim M$ if and only there exists at least one point $p\in M$ such that $H$ is strictly negative at $p$. Moreover, $\eta(p)$ is upper-semicontinuous as a function of $p$, and consequently the set
$$\{p\in M\ |\ \eta(p)=\eta_M\}
$$
is an open set in $M$ (in the classical topology). Now, the first of our results is the following.
\begin{theorem}\label{mthm}
Let $M$ be a projective manifold with a K\"ahler metric of semi-negative holomorphic sectional curvature. Then $M$ contains no rational curves and the canonical line bundle $K_M$ is nef.  Moreover, if the holomorphic sectional curvature vanishes identically, then $M$ is an abelian variety up to a finite unramified covering. If the holomorphic sectional curvature does not vanish identically, then 
\begin{enumerate}
\item the numerical dimension of $M$ is strictly positive, and
\item the nef dimension of $M$ is greater than or equal to $r_M\geq 1$.
\end{enumerate}
\end{theorem}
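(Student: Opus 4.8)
The plan is to establish the assertions in the order stated. To see that $M$ carries no rational curves, suppose $\nu\colon\PP^1\to M$ is non-constant (the normalization of a rational curve provides such a map) and set $\alpha:=\nu^{*}\omega$. This is a smooth, closed, semi-positive $(1,1)$-form which restricts to a genuine Hermitian metric on $\PP^1\setminus Z$, where $Z=\{d\nu=0\}$ is finite. On $\PP^1\setminus Z$ the Gauss equation for the holomorphic immersion $\nu$ gives that the Gauss curvature of $\alpha$ is $H_\omega(d\nu(\cdot))$ minus the squared norm of the second fundamental form, hence $\le H_\omega(d\nu(\cdot))\le 0$. Writing $\alpha=e^{2u}\omega_0$ for a fixed Fubini--Study metric $\omega_0$, the function $u$ is smooth away from $Z$ and has a logarithmic pole $k_i\log|z-z_i|$ with $k_i\ge 1$ at each $z_i\in Z$; using $\int_{\PP^1}\Delta_{\omega_0}u\,dA_{\omega_0}=0$ in the sense of distributions, the smooth part of that integral equals $-2\pi\sum_i k_i$, so that $\int_{\PP^1\setminus Z}K_\alpha\,dA_\alpha=4\pi+2\pi\sum_i k_i>0$, contradicting $K_\alpha\le 0$. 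Hence there are no rational curves on $M$; consequently $K_M$ is nef, since otherwise Mori's cone theorem together with bend-and-break would produce one.

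If $H$ vanishes identically, then because the holomorphic sectional curvature of a K\"ahler metric determines its full curvature tensor (by polarization of the quartic form $v\mapsto R(v,\bar v,v,\bar v)$), the metric $\omega$ is flat; thus the universal cover of $M$ is $\CC^n$ with the Euclidean metric, on which $\pi_1(M)$ acts by holomorphic isometries, and by the Bieberbach theorems the translation subgroup has finite index, so a finite unramified cover $M'$ of $M$ is a complex torus, which is an abelian variety as $M$, hence $M'$, is projective. Assume from now on that $H\not\equiv 0$. For (i), Berger's averaging formula for K\"ahler metrics expresses the scalar curvature of $\omega$ at each point as a fixed positive multiple of the mean of $H$ over the unit sphere of the tangent space; hence the scalar curvature is $\le 0$ everywhere and, by continuity of $H$, strictly negative on a non-empty open set. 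Integrating, $\int_M\Ric(\omega)\wedge\omega^{n-1}<0$, i.e.\ $K_M\cdot\{\omega\}^{n-1}>0$, and approximating $\{\omega\}$ by a rational ample class shows $K_M$ is not numerically trivial, so its numerical dimension is at least $1$.

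For (ii), let $f\colon M\dra Y$ be the nef reduction of $K_M$, so $\dim Y$ equals the nef dimension of $M$, $f$ is almost holomorphic with connected fibers, and $K_M$ is numerically trivial on its general fibers. Suppose the nef dimension is $<r_M$, so the general fiber $F$ has $\dim F\ge\eta_M+1$. For $p$ in the open set $\{\eta(\cdot)=\eta_M\}$ and outside a suitable proper analytic subset, the fiber $F=F_p$ through $p$ is a smooth compact subvariety disjoint from the indeterminacy locus along which $f$ is a smooth morphism; hence $N_{F/M}$ is trivial, adjunction gives $K_F=K_M|_F\equiv 0$, and so $K_F\cdot\omega_F^{\dim F-1}=0$ for $\omega_F:=\omega|_F$. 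On the other hand the Gauss equation gives $H_{\omega_F}\le H_\omega|_{TF}\le 0$, so the argument of (i) applied on $F$ forces the scalar curvature of $\omega_F$ to vanish identically (a strictly negative value somewhere would yield $\int_F\Ric(\omega_F)\wedge\omega_F^{\dim F-1}<0$, contradicting $K_F\equiv 0$), and hence $H_{\omega_F}\equiv 0$ on $F$. The Gauss equation then reads $0=H_\omega(v)-\|\sigma(v,v)\|^2\le H_\omega(v)\le 0$ for every $v\in T_pF$, so $H_\omega$ vanishes on the subspace $T_pF\subset T_pM$ of dimension $\ge\eta_M+1$, contradicting $\eta(p)=\eta_M$. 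Therefore the nef dimension of $M$ is $\ge r_M$, and $r_M\ge 1$ because $H\not\equiv 0$ forces $\eta_M\le n-1$.

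The step requiring the most care is the use of the nef reduction in (ii): although $f$ is only almost holomorphic, one must verify that through a general point of the open set $\{\eta(\cdot)=\eta_M\}$ there passes a genuine smooth compact fiber $F$, disjoint from the indeterminacy locus and along which $f$ is a smooth morphism, so that $N_{F/M}$ is trivial and adjunction yields $K_F=K_M|_F$; granting this, the remaining ingredients --- the Gauss equation for the induced K\"ahler metric on $F$ and Berger's formula on $M$ and on $F$ --- are classical.
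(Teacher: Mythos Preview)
Your proof is correct and follows essentially the same route as the paper: absence of rational curves via a Schwarz/Gauss--Bonnet argument on the normalizing $\PP^1$, nefness via Mori, the flat case via the fact that $H$ determines $R$, part (i) via Berger's averaging formula and integration, and part (ii) via the nef reduction, adjunction on a general smooth compact fiber $F$ meeting $\{\eta=\eta_M\}$, and the curvature-decreasing property. The only notable difference is in the endgame of (ii): the paper simply picks $w\in T_pF$ with $H_\omega(w)<0$ (which exists because $\dim T_pF>\eta_M=\eta(p)$), notes $H_{\omega_F}(w)\le H_\omega(w)<0$, and contradicts $c_1^\RR(K_F)=0$ directly via Proposition~\ref{incompatibility_thm}; you instead first deduce $H_{\omega_F}\equiv 0$ on all of $F$ and then feed this back through the Gauss equation at $p$ to force $H_\omega\equiv 0$ on $T_pF$. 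Both reach the same contradiction; your detour incidentally recovers that $F$ is totally geodesic, which the paper establishes later in Proposition~\ref{key_proposition}. Your closing caveat about locating a genuine smooth compact fiber through a point of $\{\eta=\eta_M\}$ is exactly the point the paper handles by combining generic smoothness with the openness of $f_h(\{\eta=\eta_M\}\setminus I)$ in $Y$.
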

For the proof, we follow the basic strategy used in \cite{heier_lu_wong_mrl}. We recall that 
\cite{heier_lu_wong_mrl} was partially inspired by an
earlier work of Peternell \cite{peternell} on Calabi-Yau and hyperbolic manifolds. Note that Theorem \ref{wong_wu_yau_thm} is an immediate corollary of Theorem \ref{mthm}(i) due the following simple lemma, applied to the case $L=K_M$.

\begin{lemma}
Let $M$ be a projective manifold of Picard number one. Let $L$ be a nef line bundle on $M$ which is of positive numerical dimension. Then $L$ is ample.
\end{lemma}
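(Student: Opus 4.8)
The plan is to use the Picard number one hypothesis to force a positive multiple of $L$ to be numerically equivalent to an ample class, and then to appeal to the fact that ampleness depends only on the numerical equivalence class.

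Since $M$ is projective, fix an ample line bundle $A$. Because the N\'eron--Severi group $\mathrm{NS}(M)$ has rank one, the classes of $L$ and $A$ are proportional in $\mathrm{NS}(M)\otimes\QQ$; clearing denominators and multiplying through to kill the (finite) torsion subgroup, we obtain integers $m>0$ and $k$ with $mL\equiv kA$. Intersecting this relation with an arbitrary irreducible curve $C\subset M$ gives $m\,(L\cdot C)=k\,(A\cdot C)$, where $L\cdot C\ge 0$ since $L$ is nef and $A\cdot C>0$ since $A$ is ample; hence $k\ge 0$. If $k=0$, then $mL\equiv 0$, so $L$ is numerically trivial and thus has numerical dimension $0$, contrary to hypothesis. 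Therefore $k>0$.

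It then remains to observe that $mL$, and hence $L$, is ample. Indeed, $mL$ is numerically equivalent to the ample line bundle $kA$, and ampleness is a numerical property: for instance, by the Nakai--Moishezon criterion one only needs the intersection numbers $(mL)^{\dim V}\cdot V$ over subvarieties $V\subseteq M$ to be positive, and these are determined by the numerical class; alternatively, by Kleiman's criterion the ample cone is the interior of the nef cone, which in Picard number one is the open half-ray spanned by $A$. This finishes the proof. There is no genuinely hard step here; the only point requiring a little care is the routine passage to $\QQ$-coefficients (equivalently, dealing with possible torsion in $\mathrm{NS}(M)$) together with the standard fact that ampleness is numerical.
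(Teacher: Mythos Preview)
Your proof is correct and follows essentially the same approach as the paper's: both write $L$ as a rational multiple of an ample class via the Picard number one hypothesis, use nefness to show the coefficient is nonnegative, use positive numerical dimension to rule out zero, and conclude ampleness via the Nakai--Moishezon(--Kleiman) criterion. Your version is slightly more explicit about clearing denominators and torsion, but the argument is the same.
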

\begin{proof}
Let $A$ be an ample divisor on $M$. Then $L$ is numerically equivalent to $cA$ for some rational number $c$. Since $L$ is nef, we have $c\geq 0$. If we had $c=0$, then $L$ would be numerically trivial, and thus its numerical dimension would be equal to zero (see Remark \ref{num_dim_zero_rkm}) in violation of the assumption. So we have $c>0$, and $L$ is ample by the Nakai-Moishezon-Kleiman ampleness criterion.
\end{proof}
\begin{remark}
It is of course an interesting question if it is ever possible to have a strict inequality in Theorem \ref{mthm}(ii). Even when $\dim M=2$ and $r_M=1$,  it is unclear whether $M$ can be a surface of nef dimension $2$, i.e., a surface of general type.  Intuitively, a surface with $r_M=1$ should be a properly elliptic surface (\cite[p. 189]{BPV}) at least in the case when the metric is analytic.
\end{remark}
In view of the Abundance Conjecture (see the discussion at the beginning of Section \ref{abund_sec}), Theorem \ref{mthm} suggests that $r_M$ should in fact be a lower bound for the Kodaira dimension of $M$, which we denote by $\kod(M)$. We offer the following two theorems that establish a partial solution to the problem of Abundance under our curvature assumption.

\begin{theorem}\label{hlw_thm1_2_gen_intro}
Let $M$ be an $n$-dimensional projective manifold of Albanese dimension $d>n-4$. Let $M$ possess a K\"ahler metric of semi-negative holomorphic sectional curvature. Then $$\kod(M) \geq r_M-(n-d-\max\{0,r_M-d\}).$$
In particular, if $M$ has maximal Albanese dimension (i.e., $d=n$),
then we have $$\kod(M)\geq r_M.$$
\end{theorem}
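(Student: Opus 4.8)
The plan is to pit the Albanese map of $M$ against the subadditivity of the Kodaira dimension, with the nefness of $K_M$ from Theorem \ref{mthm} as the common input. Write $\alpha\colon M\to A:=\Alb(M)$, let $Z:=\alpha(M)\subseteq A$, so $\dim Z=d$, and --- after passing to the Stein factorization $M\to Z'\to Z$ and a desingularization of $Z'$ --- let $F$ be a general fiber of the resulting fibration $M\to Z'$, a smooth projective subvariety of $M$ of dimension $c:=n-d\le 3$. By Theorem \ref{mthm}, $K_M$ is nef and $M$ carries no rational curves, so $F$ carries none either; and since the holomorphic sectional curvature of the restricted metric on $F$ satisfies $H_F(v)=H_M(v)-2|\mathrm{II}(v,v)|^2\le H_M(v)\le 0$ by the Gauss equation, $F$ again satisfies the hypotheses of Theorem \ref{mthm}. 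As $\dim F\le 3$, abundance holds for $F$, so $\kod(F)$ equals the numerical dimension of $F$, and the results available in dimension $\le 3$ (Gauss--Bonnet for curves, the surface structure theorem, and, in dimension three, abundance together with the semi-negative analogue of the main result of \cite{heier_lu_wong_mrl}) yield $\kod(F)\ge r_F$.

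I would then record two inputs. First, the Gauss-equation identity above shows that a subspace of $T_pF$ on which $H_F$ vanishes is a subspace of $T_pM$ on which $H_M$ vanishes (if $0=H_F(v)=H_M(v)-2|\mathrm{II}(v,v)|^2$ with $H_M(v)\le 0$, then $H_M(v)=0$); applying this at a general $p$, where $T_pF=\ker d\alpha_p$ has dimension $c$ and $\eta(p)=\eta_M$, gives $\eta(p,F)\le\min\{\eta_M,c\}$, hence $r_F\ge c-\min\{\eta_M,c\}=\max\{0,r_M-d\}$, and so $\kod(F)\ge\max\{0,r_M-d\}$. Second, $Z$, being a subvariety of an abelian variety, has maximal Albanese dimension, so Kawamata's subadditivity theorem for algebraic fiber spaces over a base of maximal Albanese dimension applies to $M\to Z'$ and gives $\kod(M)\ge\kod(F)+\kod(Z')\ge\kod(F)+\kod(Z)$.

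It remains to bound $\kod(Z)$ from below. By Ueno's theorem there is an abelian subvariety $A_0\subseteq A$, say of dimension $q_0$, with $Z+A_0=Z$, such that $Z$ maps to $W:=Z/A_0$ as an analytically locally trivial $A_0$-bundle and $\kod(Z)=\dim W=d-q_0$. The crux of the proof is the inequality
\[
q_0\le\eta_M .
\]
Granting it, $\kod(Z)=d-q_0\ge d-\eta_M=r_M-(n-d)$, and therefore
\[
\kod(M)\ge\kod(F)+\kod(Z)\ge\max\{0,r_M-d\}+\bigl(r_M-(n-d)\bigr)=r_M-\bigl(n-d-\max\{0,r_M-d\}\bigr),
\]
which is the assertion; the case of maximal Albanese dimension is the special case $c=0$, where $\max\{0,r_M-d\}=0$.

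The proof of $q_0\le\eta_M$ is where I expect the main difficulty. I would study the subvariety $M_0:=\alpha^{-1}(C_0)$ for a general $A_0$-coset $C_0\subseteq Z$: a smooth projective variety carrying a Kähler metric of semi-negative holomorphic sectional curvature, surjecting onto the abelian variety $C_0\cong A_0$ with general fiber $F$, so $\dim M_0=q_0+c$. The goal is to exhibit, at a general point $p$, a $q_0$-dimensional subspace of $T_pM$ on which $H_M$ vanishes --- concretely, a lift of the translation-invariant direction $T_0A_0\subseteq T_{\alpha(p)}Z$ --- which forces $\eta(p)\ge q_0$ and hence $\eta_M\ge q_0$. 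This is exactly where the integrated Schwarz Lemma (Proposition \ref{incompatibility_thm}, together with Proposition \ref{prop_tot_geod}) should enter: because $Z$ is genuinely invariant under translation by $A_0$, moving along the $A_0$-directions carries no ``hyperbolicity'', so the holomorphic sectional curvature must degenerate along them. Turning this heuristic into a proof --- in particular controlling the position of the null cone of $H_M$ relative to $\ker d\alpha$ and tracking it through $M_0$ --- is the technical heart of the argument; the remainder is bookkeeping with $C_{n,m}$ and Ueno's structure theorem.
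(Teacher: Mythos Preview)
Your proposal diverges from the paper's argument at exactly the point you flag as ``the crux'', and the gap there is genuine rather than merely technical. You try to bound $\kod(Z)$ directly via Ueno's theorem and then assert $q_0\le\eta_M$, where $q_0$ is the dimension of Ueno's stabilizing abelian subvariety $A_0$ for $Z=\alpha(M)$. But $A_0$ is an invariant of the \emph{image} $Z\subset A$, and there is no canonical lift of the ``$A_0$-directions'' to $TM$: the differential $d\alpha$ has a $c$-dimensional kernel, so the purported $q_0$-dimensional flat subspace of $T_pM$ is not well-defined. Proposition~\ref{prop_tot_geod} would let you conclude flatness along the image of a generically finite map $N\to M$ with $-K_N$ pseudo-effective, but the natural candidate $M_0=\alpha^{-1}(C_0)$ is \emph{not} such an $N$: it has $K_{M_0}=K_M|_{M_0}$ nef (not anti-nef), and multisections of $M_0\to C_0$ need not have pseudo-effective anti-canonical divisor either. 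Even if one finds an individual elliptic (or abelian) horizontal subvariety, that only controls $\eta(p)$ along it, not at a general point of $M$. So the heuristic ``$Z$ is $A_0$-invariant $\Rightarrow$ $H_M$ degenerates along $A_0$-directions'' does not convert into a proof with the tools at hand.

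The paper avoids this difficulty by working from the opposite end: instead of Ueno's $A_0$ for $Z$, it uses the Kodaira--Iitaka fibration $\pi:M^*\to Y^*$ (available once $\kod(M)\ge 0$, which follows from $C_{n,m}$ for fibers of dimension $\le 3$). A general fiber $G$ has $\kod(G)=0$; intersecting $G$ with the strict transform $\tilde F$ of an Albanese fiber gives components $S$ with $\dim S\le n-d\le 3$. The Easy Addition Formula applied to $\pi|_{\tilde F}$, together with your bound $\kod(\tilde F)\ge\max\{0,r_M-d\}$, forces $\dim S\le n-d-\max\{0,r_M-d\}$ and (via $C_{n,m}$ again) $\kod(S)=\kod(a(\sigma(G)))=0$, so $a(\sigma(G))$ is a translate of an abelian subvariety $A_0$ of dimension at least $n-\kod(M)-(n-d-\max\{0,r_M-d\})$. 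The final bound then comes, as in the maximal Albanese dimension case, by analyzing fibers of $p\circ a:M\to A/A_0$ and invoking Proposition~\ref{incompatibility_thm}. The point is that the abelian subvariety arises from a Kodaira--Iitaka fiber (so something in $M$ with $\kod=0$), not from Ueno's theorem on $Z$; this is what makes the curvature argument go through. Your inputs $\kod(F)\ge r_F\ge\max\{0,r_M-d\}$ are correct and are used in the paper, but they feed into the Easy Addition Formula for $\pi|_{\tilde F}$ rather than into a direct subadditivity estimate for $M\to Z'$.
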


\begin{theorem}\label{mthm_kod_version_intro}
Let $M$ be an $n$-dimensional projective K\"ahler manifold  
of semi-negative holomorphic sectional curvature.
Suppose the Abundance Conjecture holds up to dimension $e$ (which is currently known for $e=3$). Suppose 
$\kod(M)\geq n-e$.  Then $$\kod(M) \geq r_M.$$
\end{theorem}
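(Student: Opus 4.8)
The plan is to prove the equivalent statement $\eta_M\geq n-\kod(M)$; since $r_M=n-\eta_M$, this is precisely the inequality $r_M\leq \kod(M)$. Because $\eta$ is upper semicontinuous and a general (Zariski‑dense) point of $M$ will be the one we can control, it suffices to establish the pointwise bound $\eta(p)\geq n-\kod(M)$ at a general point $p\in M$: if this holds on a dense set, then for any $q\in M$ and any $q_j\to q$ with $\eta(q_j)\geq n-\kod(M)$, upper semicontinuity gives $\eta(q)\geq\limsup_j\eta(q_j)\geq n-\kod(M)$, whence $\eta_M\geq n-\kod(M)$. We may assume $\kod(M)<n$, since otherwise $r_M\leq n=\kod(M)$ is automatic, and by Theorem \ref{mthm} we already know that $K_M$ is nef and that $M$ carries no rational curves.

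The key geometric object is the Iitaka fibration of $M$: a birational morphism $u\colon M'\to M$ from a smooth projective $M'$ together with a surjective morphism $\phi\colon M'\to W$ with connected fibers onto a smooth projective $W$ of dimension $\kod(M)$, a general fiber $F$ of which satisfies $\kod(F)=0$. Its dimension is $\dim F=n-\kod(M)$, which by the hypothesis $\kod(M)\geq n-e$ is at most $e$. A general fiber $F$ can be regarded, via $u$, as a subvariety of $M$, smooth at a general point of itself; restricting the K\"ahler metric of $M$ to (the smooth locus of) $u(F)$ and invoking the Gauss equation $H_{F}(v)=H_M(u_*v)-|\sigma(v,v)|^2$ for the second fundamental form $\sigma$ of $u(F)$ in $M$, one sees that $F$ inherits a K\"ahler metric of semi-negative holomorphic sectional curvature. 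We may then apply Theorem \ref{mthm} and the Abundance Conjecture in dimension $\dim F\leq e$ to $F$: $K_F$ is nef by Theorem \ref{mthm}, hence semiample by Abundance, and the morphism defined by a large multiple of $K_F$ then has $K_F$ numerically trivial on its fibers and positive on all other curves, so it is a nef reduction map for $K_F$; consequently the nef dimension of $F$ equals $\kod(F)=0$. If the holomorphic sectional curvature of the induced metric on $F$ were not identically zero, Theorem \ref{mthm}(ii) applied to $F$ would force the nef dimension of $F$ to be $\geq r_F\geq 1$, a contradiction. Hence $r_F=0$, i.e. the holomorphic sectional curvature of the induced metric on $F$ vanishes identically (in fact $F$ is an abelian variety up to a finite unramified covering, by Theorem \ref{mthm}).

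Feeding this back into the Gauss equation: for every $v\in T_pF$ at a general point $p$ we have $0=H_F(v)=H_M(u_*v)-|\sigma(v,v)|^2$ with $H_M(u_*v)\leq 0$ and $|\sigma(v,v)|^2\geq 0$, which forces $H_M\equiv 0$ on the $(n-\kod(M))$-dimensional subspace $u_*(T_pF)\subset T_{u(p)}M$. Thus $\eta(p)\geq n-\kod(M)$ at a general $p$, and the semicontinuity argument above yields $\eta_M\geq n-\kod(M)$, i.e. $r_M\leq\kod(M)$. The remaining case $n\leq e$ is handled in the same way with $M$ itself in place of $F$: $K_M$ is nef and, by Abundance in dimension $\leq e$, semiample, so the nef dimension of $M$ equals $\kod(M)$, and if $H$ does not vanish identically Theorem \ref{mthm}(ii) gives $\kod(M)=\text{nef dim}(M)\geq r_M$, while if $H\equiv 0$ then $r_M=0\leq\kod(M)$.

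The step demanding the most care — and the main obstacle — is the passage "a general fiber $F$ of the Iitaka fibration carries a genuine K\"ahler metric of semi-negative holomorphic sectional curvature". A priori $F$ lives in the blown-up model $M'$ and may meet the exceptional locus of $u$, so that $u^*\omega$ restricts to $F$ only as a smooth semi-positive form, which is a K\"ahler metric of semi-negative holomorphic sectional curvature on a Zariski-dense open subset of the compact $F$ but possibly degenerates along $F\cap\mathrm{Exc}(u)$. One must therefore either arrange the birational model so that the general fiber is disjoint from $\mathrm{Exc}(u)$ (hence an honest K\"ahler submanifold of $M$), or show directly that such a semi-positive form on a smooth projective variety of dimension $\leq e$ with vanishing Kodaira dimension — K\"ahler with semi-negative holomorphic sectional curvature on a dense open set — already forces the holomorphic sectional curvature to vanish identically there. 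Once this is settled, the rest of the argument (the semicontinuity propagation, and the identification of the nef dimension of a manifold with semiample canonical bundle with its Kodaira dimension) is routine.
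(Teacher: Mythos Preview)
Your overall strategy --- pass to a general fiber $F$ of the Iitaka fibration, use Abundance in dimension $\leq e$ to control $K_F$, show the induced metric on $F$ is flat, and feed the Gauss--Codazzi equation back to $M$ to get $\eta(p)\geq n-\kod(M)$ at a general point --- is exactly the paper's route, and your endgame (the semicontinuity reduction, the identification of nef dimension with Kodaira dimension once the canonical bundle is semiample) is correct.

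The gap you flag in your last paragraph, however, is real and you have not closed it. Neither of your two proposed fixes is carried out: there is no reason the general fiber $F\subset M'$ should avoid $\mathrm{Exc}(u)$ (the exceptional locus can dominate the Iitaka base), so $u^*\omega|_F$ may genuinely be only semi-positive, and then neither Theorem~\ref{mthm} nor Proposition~\ref{incompatibility_thm} applies to the compact $F$. The paper closes this gap with Proposition~\ref{prop_tot_geod} (restated and proved as Proposition~\ref{key_proposition}), an integrated non-equidimensional Schwarz Lemma: if $N$ is smooth projective with $-K_N$ pseudo-effective and $f:N\to M$ is merely a generically finite morphism into a projective K\"ahler $M$ of semi-negative holomorphic sectional curvature, then $f$ is forced to be a totally geodesic holomorphic \emph{immersion} inducing a flat metric on $N$. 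Applied to $u|_F:F\to M$ --- with $-K_F$ pseudo-effective because Abundance in dimension $\leq e$ upgrades $\kod(F)=0$ to $K_F$ numerically trivial --- this simultaneously rules out any degeneration of $u^*\omega|_F$, establishes flatness, and gives total geodesy, so that $H_M\equiv 0$ on $u_*(T_pF)$ follows at once. That proposition is the substantive content of the section; with it in hand the theorem is, as the paper says, a one-line application to the general fibers of the Kodaira--Iitaka map. What you are missing is precisely this lemma, and your second suggested fix (``show directly that such a semi-positive form \ldots\ forces the holomorphic sectional curvature to vanish identically'') is essentially a request for it.
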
 

We remark again that, by \cite{Kawamata_85},
 manifolds of maximal Kodaira dimension without
rational curves have ample canonical bundles. Hence
the above two theorems represent generalizations of 
the key theorems of \cite{heier_lu_wong_mrl}, see
Section~\ref{abund_sec}. \par

Theorem~\ref{mthm_kod_version_intro} follows immediately from applying the subsequent proposition to the Kodaira-Iitaka map of $M$. The proposition generalizes Proposition~\ref{incompatibility_thm}
and Lemma~\ref{Shwarz} and is a general integrated (and non-equidimensional) form of the Schwarz Lemma which should be of strong independent interest.

\begin{proposition} \label{prop_tot_geod} Let $M$ be a projective manifold with a K\"ahler metric of semi-negative holomorphic sectional curvature.
Let $N$ be a $k$-dimensional projective variety with at most canonical (or even klt) singularities having pseudo-effective anti-canonical $\QQ$-Cartier divisor $-K_N$. Let $f:N\dra M$ be a rational map
that is generically finite, i.e., $df$ has rank $k$ somewhere.
Then $K_N$ is numerically trivial, and $f$ is a holomorphic immersion that
induces a flat metric on the smooth locus $N_{sm}$ of $N$ and is totally geodesic along $N_{sm}$. In particular, if $N$ is smooth,
then $N$ admits an abelian variety as an unramified covering
and $f$ is a totally geodesic holomorphic immersion that
induces a flat metric on $N$.
\end{proposition}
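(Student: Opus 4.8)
The plan is to carry out the integrated Ahlfors--Schwarz computation underlying Proposition~\ref{incompatibility_thm} and Lemma~\ref{Shwarz}, now in the non-equidimensional singular setting, and to exploit the fact that the pseudo-effectivity of $-K_N$ forces the resulting curvature inequality to be an equality, from which rigidity follows. First I would reduce to a morphism: choose a resolution $\pi\colon\widehat N\to N$ resolving both the singularities of $N$ and the indeterminacy of $f$, so that $g:=f\circ\pi\colon\widehat N\to M$ is a morphism, generically finite onto a $k$-dimensional image $V=g(\widehat N)\subset M$. Writing $K_{\widehat N}=\pi^{*}K_N+\Delta$, the divisor $\Delta$ is $\pi$-exceptional with coefficients in $(-1,\infty)$ (nonnegative in the canonical case); since $N$ is normal, the sets $\mathrm{Sing}(N)$, the indeterminacy locus of $f$, and $\pi(E_i)$ for every component $E_i$ of $\Delta$ all have dimension $\le k-2$. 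Set $\beta:=g^{*}\omega$, a smooth closed semi-positive $(1,1)$-form on $\widehat N$ which restricts to a genuine K\"ahler metric on the dense Zariski-open $U\subset\widehat N$ where $dg$ has rank $k$, i.e.\ where $g$ is a local holomorphic embedding.

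Fix a reference K\"ahler metric $\omega_0$ on $\widehat N$ and let $\rho:=\beta^{k}/\omega_0^{k}\ge 0$, so $U=\{\rho>0\}$. Since $\rho$ is locally plurisubharmonic up to a positive smooth factor, the current $\mathrm{Ric}(\beta):=\mathrm{Ric}(\omega_0)-dd^{c}\log\rho$, which lies in the class $c_1(\widehat N)$ and equals the genuine Ricci form of $\beta$ on $U$, decomposes globally as $\mathrm{Ric}(\beta)=R_{sm}-P$ with $R_{sm}$ smooth and $P\ge 0$ a closed positive current supported on $\widehat N\setminus U$. By the Gauss equation for complex submanifolds, on $U$ the holomorphic sectional curvature of $\beta$ in each direction equals that of $\omega$ along the image direction minus a nonnegative second-fundamental-form term, so it is $\le 0$ and hence the scalar curvature of $\beta$ is $\le 0$ on $U$. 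Therefore
$$c_1(\widehat N)\cdot[\beta]^{k-1}=\int_{\widehat N}R_{sm}\wedge\beta^{k-1}-\langle P,\beta^{k-1}\rangle\;\le\;c_k\!\int_{U}\mathrm{scal}_{\beta}\,\beta^{k}\;\le\;0$$
for a positive constant $c_k$, i.e.\ $K_{\widehat N}\cdot[\beta]^{k-1}\ge 0$. On the other hand $[\beta]=g^{*}[\omega]$ is nef, and every component $E_i$ of $\Delta$ has $\dim g(E_i)\le k-2$ — because $\dim\pi(E_i)\le k-2$, or, for the exceptional divisors over the indeterminacy locus of $f$, because $E_i$ is uniruled and $M$ contains no rational curves (Theorem~\ref{mthm}), so $\dim g(E_i)=k-1$ is impossible — whence $\Delta\cdot[\beta]^{k-1}=\sum_i a_i\int_{E_i}\beta^{k-1}=0$ and
$$-K_{\widehat N}\cdot[\beta]^{k-1}=\bigl(\pi^{*}(-K_N)-\Delta\bigr)\cdot[\beta]^{k-1}=\pi^{*}(-K_N)\cdot[\beta]^{k-1}\;\ge\;0$$
because $-K_N$ is pseudo-effective and $[\beta]$ is nef. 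Hence all inequalities are equalities: $\langle P,\beta^{k-1}\rangle=0$ and $\int_U\mathrm{scal}_\beta\,\beta^k=0$, so $\mathrm{scal}_\beta\equiv 0$ on $U$, and — scalar curvature being a fixed positive multiple of the average of the holomorphic sectional curvature, which is $\le 0$ — the holomorphic sectional curvature of $\beta$ vanishes identically on $U$. As the curvature tensor of a K\"ahler metric is determined by its holomorphic sectional curvature, $\beta$ is flat on $U$, and the equality case of the Gauss equation then forces the second fundamental form of $g|_U$ to vanish, so $g|_U$ is a totally geodesic holomorphic immersion inducing a flat metric; in particular $R_{sm}\equiv 0$ on $U$, hence $R_{sm}\equiv 0$ on $\widehat N$.

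Now globalize. From $R_{sm}\equiv 0$ we get $c_1(\widehat N)\equiv -[P]$, so $K_{\widehat N}\equiv[P]$ is pseudo-effective; pushing forward, $K_N=\pi_{*}K_{\widehat N}$ is pseudo-effective, and together with the hypothesis that $-K_N$ is pseudo-effective this yields $K_N\equiv 0$, since the pseudo-effective cone is salient. It remains to upgrade the generic statements on $U$ to all of $N_{sm}$: one must show that $f$ has no base points and $df$ no rank drop on $N_{sm}$, which should follow from a careful analysis of $\Delta$ and of the degeneracy locus of $g$ — using $K_N\equiv 0$, the negativity of the intersection form on the $\pi$-exceptional locus, the vanishing $\langle P,\beta^{k-1}\rangle=0$, and the absence of rational curves in $M$ — and that the flat totally geodesic structure then extends across $\widehat N\setminus U$. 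Granting this, $f|_{N_{sm}}$ is a totally geodesic holomorphic immersion inducing a flat metric on $N_{sm}$. When $N$ is smooth, $N$ carries the flat K\"ahler metric $f^{*}\omega$ globally, so by the Bieberbach theorem (equivalently, by Yau's theorem on Ricci-flat K\"ahler metrics together with the Cheeger--Gromoll splitting) it is finitely and unramifiedly covered by a complex torus, necessarily an abelian variety since $N$, and hence the covering torus, is projective.

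The step I expect to be the main obstacle is precisely this globalization: propagating the flatness of $\beta$ and the vanishing of the second fundamental form from the generic locus $U$ to all of $N_{sm}$ while simultaneously ruling out base points of $f$ and rank drops of $df$. This forces one to keep very careful track of how $\beta$ degenerates along the exceptional and ramification divisors — exactly where the klt (canonical) hypothesis and the codimension-$\ge 2$ structure are used decisively, and where the integrated Schwarz estimate must be exploited in its sharp equality form rather than as a mere inequality.
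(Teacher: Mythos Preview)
Your approach is the same integrated Schwarz Lemma strategy as the paper's, and the core inequality chain is correct. The gap is in the technical implementation, and it shows up twice. First, the decomposition $\Ric(\beta)=R_{sm}-P$ with $R_{sm}$ \emph{smooth} is not justified: $\log\rho$ is indeed quasi-psh, but $dd^c\log\rho$ need not split as a smooth form plus a divisorial current unless the degeneracy locus of $dg$ is purely of codimension one, which you have not arranged; the Siu decomposition only gives a residual positive current, not a smooth form, so the step ``$R_{sm}\equiv 0$ on $U$, hence on $\widehat N$'' is unsupported. Second---and you correctly flag this as the main obstacle---the globalization from $U$ to $N_{sm}$ resists your current-theoretic framework. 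The paper resolves both issues with a single device you are missing: after blowing up so that the zero scheme of $\det(dg)$ is a divisor $D$, it introduces the \emph{saturation} $L\subset g^{-1}(\Lambda^k TM)$ of the image of $\det(dg)$, a genuine line subbundle carrying the smooth induced metric $h_L$ from $\Lambda^k\omega$. Then $\Ric(h_L)$ is smooth everywhere and agrees with $\Ric(\beta)$ on $U$, and the integral $\int c_1(L)\wedge\beta^{k-1}$ splits cleanly as $\int_D\beta^{k-1}+\int c_1(-K_{\widehat N})\wedge\beta^{k-1}$, both nonnegative. Equality forces $\Ric(h_L)\equiv 0$ as a smooth form, so vanishing on $U$ does propagate; then $c_1(L)=0$ makes $L$ trivial on each rationally connected exceptional fiber, hence $L$ descends to a subbundle of $f^{-1}(\Lambda^k TM)$ on $N$, and $\det(df)$, viewed as a section of a line bundle on $N_{sm}$, has zero locus of codimension $\ge 2$ and is therefore nowhere zero. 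This is precisely the descent argument your framework cannot supply.

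One simplification you overlooked: since $M$ contains no rational curves (Theorem~\ref{mthm}), any rational map from a normal variety to $M$ is already a morphism, so there is no indeterminacy to resolve and your case analysis of exceptional components over the indeterminacy locus is unnecessary.
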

In dimension two, we are able to obtain the precise structure theorem below. Note that by the base point freeness of 
pluricanonical systems in dimension two, we can and do take the 
Kodaira-Iitaka map to be the morphism given by an appropriate pluricanonical map here.
\begin{theorem}\label{structure_thm_surface}
Let $M$ be a smooth projective surface with a K\"ahler metric of semi-negative holomorphic sectional curvature. Then one of the following will hold true.
\begin{enumerate}
\item $\kod (M) = 0\!:$ $M$ is an abelian surface up to a finite unramified covering, i.e., $M$ is an abelian surface or a hyperelliptic surface.
\item $\kod(M)=1\!:$ The Kodaira-Iitaka map of $M$ is an elliptic fibration whose only singular fibers are multiple elliptic curves. The base space is a smooth orbifold curve with ample orbifold canonical divisor. Moreover, $M$ admits a product of smooth curves $C\times F$ as a finite unramified covering and the metric of $M$ pulled back to $C\times F$ is the product of a non-flat metric of semi-negative curvature on $C$ and a flat metric on $F$ up to the addition of  mixed terms each a product of (anti)holomorphic one forms, one
from $C$ and one from $F$ as in Proposition \ref{metric_dec}.
\item $\kod (M) = 2\!:$ The canonical line bundle of $M$ is ample.
\end{enumerate} 
\end{theorem}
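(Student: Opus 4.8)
The plan is to argue by cases according to the value of $\kod(M)\in\{0,1,2\}$ --- the value $-\infty$ is excluded because $M$ has no rational curves by Theorem~\ref{mthm}, hence is minimal, and a minimal surface with nef canonical bundle has non-negative Kodaira dimension --- and to check that these three cases coincide with (i), (ii), (iii). If $\kod(M)=0$, then $K_M$ is numerically trivial (a minimal surface of Kodaira dimension $0$), so by the contrapositive of the numerical-dimension assertion in Theorem~\ref{mthm} the holomorphic sectional curvature vanishes identically, and Theorem~\ref{mthm} then gives that $M$ admits an abelian variety as a finite unramified cover; the Enriques--Kodaira classification now identifies $M$ as an abelian or hyperelliptic surface (the K3 and Enriques cases are excluded, since those are not covered by tori), which is (i). If $\kod(M)=2$, then $M$ is a minimal surface of general type without rational curves, so by Kawamata's theorem \cite{Kawamata_85} recalled in the introduction $K_M$ is ample, which is (iii).

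Suppose $\kod(M)=1$. I would first analyze the Kodaira--Iitaka map, which by the remark preceding the statement may be taken to be a morphism $\phi\colon M\to B$ onto a smooth curve defined by a base-point-free pluricanonical system; by adjunction its general fibre is a smooth elliptic curve, and $\phi$ is relatively minimal since $M$ has no $(-1)$-curves. Running through Kodaira's classification of singular fibres of a relatively minimal elliptic fibration, every type other than a multiple of a smooth elliptic curve contains a rational curve, so --- $M$ having none --- all singular fibres are multiple elliptic curves; in particular the $j$-invariant is finite at every point of $B$ (each reduced fibre being a smooth elliptic curve) and extends to a morphism $B\to\PP^1$, hence is constant, so $\phi$ is isotrivial with fixed fibre an elliptic curve $F$. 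Moreover $K_M^2=0$ and $e(M)=0$ (the generic fibre and every singular fibre having Euler number $0$), so Noether's formula gives $\chi(\calo_M)=0$; Kodaira's canonical bundle formula then reads $K_M\equiv\phi^*K_{(B,\Delta)}$, where $(B,\Delta)$ carries an orbifold point of order equal to the multiplicity of each multiple fibre, and $\kod(M)=1$ forces $\deg K_{(B,\Delta)}>0$, i.e.\ the orbifold canonical divisor is ample.

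Now the curvature enters. Each reduced fibre $F_b\subset M$ is a smooth elliptic curve with $-K_{F_b}$ trivial, hence pseudo-effective, so Proposition~\ref{prop_tot_geod} applied to $F_b\hookrightarrow M$ shows every fibre of $\phi$ is a totally geodesic, flat elliptic curve in $M$. Since $(B,\Delta)$ has ample orbifold canonical divisor, it is uniformized by the disk, and its orbifold fundamental group (a Fuchsian group) has a torsion-free finite-index subgroup by Selberg's lemma, which yields a finite orbifold covering $C\to(B,\Delta)$ by a smooth curve $C$ with no orbifold structure. Pulling $\phi$ back along $C\to B$ and normalizing (the standard elimination of multiple fibres, cf.\ \cite{BPV}), the ramification of $C\to B$ exactly cancels the multiplicities of the multiple fibres, so the total space $\hat M$ is smooth and \'etale over $M$ --- hence projective, free of rational curves, and carrying a semi-negative K\"ahler metric by pullback --- and $\hat\phi\colon\hat M\to C$ is a smooth isotrivial elliptic fibre bundle with fibre $F$ whose fibres are totally geodesic and flat; moreover $g(C)\ge 2$, because $\kod(\hat M)=\kod(M)=1$ while $\chi(\calo_{\hat M})=0$ forces $K_{\hat M}\equiv\hat\phi^*K_C$, whence $\kappa(C,K_C)=1$.

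The remaining --- and, I expect, the main --- obstacle is to upgrade $\hat M$ to an honest product: to exhibit a finite \'etale cover of $M$ isomorphic to a product of smooth curves $C\times F$, and to identify the metric on it. After a further finite \'etale base change $C'\to C$ trivializing the (finite) monodromy of the relative Jacobian $\mathrm{Jac}(\hat M/C)\to C$, the pullback $M'\to C'$ becomes a holomorphic principal $F$-bundle that is still finite \'etale over $M$; the content is that this bundle, after at most one more finite \'etale cover, is trivial. This is exactly the role of Proposition~\ref{metric_dec}: the combination of totally geodesic flat fibres with semi-negative holomorphic sectional curvature is rigid enough to force the pulled-back metric, on the universal cover, to split as a product of a metric on the disk-covered base with a flat metric on $F$ up to holomorphic mixed terms, which in turn forces the fibration to be, up to finite \'etale cover, the trivial bundle $C\times F$. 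Proposition~\ref{metric_dec} then delivers the asserted shape of the metric on $C\times F$ --- the sum of a metric $\omega_C$ on $C$, a flat metric on $F$, and mixed terms each a wedge of a holomorphic one-form from $C$ with one from $F$ --- and $\omega_C$, being the restriction of the semi-negative metric to a slice $C\times\{\mathrm{pt}\}$, has semi-negative holomorphic (i.e.\ Gaussian) curvature and is non-flat, since otherwise $C$ would be uniformized by $\CC$, contradicting $g(C)\ge 2$. This establishes (ii). In summary, the entire argument is classical surface geometry together with one application each of Theorem~\ref{mthm} and Proposition~\ref{prop_tot_geod}, the one genuinely delicate point being the passage from ``isotrivial elliptic fibration with totally geodesic flat fibres and semi-negative holomorphic sectional curvature'' to ``metric product with $C\times F$ up to finite \'etale cover'', which should be quoted from Proposition~\ref{metric_dec} rather than reproved.
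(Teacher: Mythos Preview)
Your treatment of the cases $\kod(M)\in\{0,2\}$ and most of the $\kod(M)=1$ analysis (Kodaira's table of singular fibers, the orbifold canonical bundle formula, Selberg's lemma, the \'etale base change eliminating multiple fibers) matches the paper's proof. There is, however, a genuine gap in how you obtain the product structure $C\times F$. You assert that trivializing the residual principal $F$-bundle ``is exactly the role of Proposition~\ref{metric_dec},'' but this is circular: Proposition~\ref{metric_dec} takes as its \emph{hypothesis} that the total space is already a product $Y\times F$, and then describes the metric. It neither proves nor can prove that a fiber bundle trivializes---its proof opens with the K\"unneth decomposition of $H^{1,1}$, which presupposes the product. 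A holomorphic principal $F$-bundle over a curve of genus $\geq 2$ is not automatically trivial, and no amount of metric rigidity on the total space will make it so without further input.

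In the paper the trivialization is purely algebraic and has nothing to do with curvature: once $\tilde M\to C$ is a holomorphic elliptic fiber bundle, \cite[Prop.~VI.8]{Bea96}---based on the existence and affineness of fine moduli for elliptic curves with level structure---supplies a further finite unramified cover of $C$ over which the bundle splits as $C\times F$. Only \emph{after} this does the paper invoke Proposition~\ref{metric_dec} to read off the metric. Incidentally, your detour through Proposition~\ref{prop_tot_geod} to show that each fiber carries a flat (hence K\"ahler--Einstein) induced metric is not wasted: it is precisely the hypothesis Proposition~\ref{metric_dec} needs on the fibers, a verification the paper leaves implicit.
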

The proof of this theorem is partially based on the following general metric decomposition result valid in arbitrary dimension.
 \begin{proposition}\label{metric_dec}
Let $M=Y\times F$, where $Y$ and $F$ are projective manifolds. Let $\pi$ and $p$ be the projections to $Y$ and $F$ respectively. Let $\omega$ be a K\"ahler form on $M$
whose restrictions to the fibers of $\pi$ yield K\"ahler-Einstein metrics on these fibers. Then these
restrictions are in fact the pullback of a  K\"ahler-Einstein form 
$\omega_F$ on $F$ and 
$$\omega-p^*\omega_F=\pi^*\omega_Y
+\sum_i (\pi^*\mu_i\wedge\overline{ p^*\nu_i}
+\overline{\pi^*\mu_i}\wedge p^*\nu_i)$$
for a K\"ahler form $\omega_Y$ on $Y$ and 
holomorphic one forms $\mu_i$ on $Y$
and $\nu_i$ on $F$. In particular, if $Y$ or $F$ has 
zero irregularity, then $\omega$ corresponds to 
the product of a K\"ahler-Einstein
metric on $F$ and a K\"ahler metric on $Y$.
\end{proposition}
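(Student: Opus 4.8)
The plan is to obtain the decomposition from the Hodge--K\"unneth decomposition of $H^2(M,\RR)$ together with the uniqueness of K\"ahler--Einstein metrics in a fixed K\"ahler class. Fix a base point $y_0\in Y$ and let $\omega_F$ be the restriction of $\omega$ to the fiber $\{y_0\}\times F$; this is a K\"ahler--Einstein form on $F$ by hypothesis. Since all fibers of $\pi$ are homologous, the class $[\omega|_{\{y\}\times F}]\in H^2(F,\RR)$ does not depend on $y$, so it equals $[\omega_F]$ for every $y\in Y$. Hence $\theta:=\omega-p^*\omega_F$ is a closed real $(1,1)$-form on the compact K\"ahler manifold $M=Y\times F$ whose restriction to each fiber is cohomologically trivial; equivalently, under the K\"unneth isomorphism $H^2(M,\RR)\cong H^2(Y,\RR)\oplus(H^1(Y,\RR)\otimes H^1(F,\RR))\oplus H^2(F,\RR)$ the class $[\theta]$ has zero component in the summand $H^2(F,\RR)$ (projection onto which is exactly restriction to a fiber).

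Next I would pin down the shape of a representative of $[\theta]$. With respect to a product K\"ahler metric on $M$, harmonic forms are sums of wedge products of harmonic forms on the two factors; using that harmonic $(1,0)$-forms on a compact K\"ahler manifold are precisely the holomorphic $1$-forms, that $[\theta]$ is of type $(1,1)$, and that it has no $H^2(F,\RR)$-component, the harmonic representative of $[\theta]$ has the form $\pi^*\alpha+\sum_i(\pi^*\mu_i\wedge\overline{p^*\nu_i}+\overline{\pi^*\mu_i}\wedge p^*\nu_i)$, where $\alpha$ is a (not necessarily positive) harmonic real $(1,1)$-form on $Y$ and the $\mu_i,\nu_i$ are holomorphic $1$-forms on $Y$, respectively $F$; the pairing of the two mixed sums and the possibility of using the same $\mu_i,\nu_i$ in both reflect the reality of $[\theta]$. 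By the $\partial\bar\partial$-lemma on $M$ there is then a smooth real function $\phi$ on $M$ with $\omega-p^*\omega_F=\pi^*\alpha+\sum_i(\pi^*\mu_i\wedge\overline{p^*\nu_i}+\overline{\pi^*\mu_i}\wedge p^*\nu_i)+\sqrt{-1}\,\partial\bar\partial\phi$.

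The crux is to absorb $\sqrt{-1}\,\partial\bar\partial\phi$ into the $Y$-factor. Restricting the last identity to a fiber $\{y\}\times F$ annihilates every term containing a pullback from $Y$ (that is, $\pi^*\alpha$ and each $\pi^*\mu_i$), and leaves $\omega|_{\{y\}\times F}=\omega_F+\sqrt{-1}\,\partial_F\bar\partial_F(\phi|_{\{y\}\times F})$. Both sides are K\"ahler--Einstein metrics on $F$ lying in the class $[\omega_F]$, so by uniqueness of the K\"ahler--Einstein metric in a fixed K\"ahler class (Calabi--Yau and Aubin--Yau when $c_1(F)\le 0$) they coincide; therefore $\phi|_{\{y\}\times F}$ is pluriharmonic on the compact manifold $F$, hence constant. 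Thus $\phi$ is constant along fibers, so $\phi=\pi^*c$ for a smooth function $c$ on $Y$ and $\sqrt{-1}\,\partial\bar\partial\phi=\pi^*(\sqrt{-1}\,\partial\bar\partial c)$. Moreover $\omega|_{\{y\}\times F}=\omega_F$ for all $y$, which is exactly the assertion that the fiber restrictions of $\omega$ are the pullback of the K\"ahler--Einstein form $\omega_F$. Putting $\omega_Y:=\alpha+\sqrt{-1}\,\partial\bar\partial c$, a closed real $(1,1)$-form on $Y$, yields the claimed identity $\omega-p^*\omega_F=\pi^*\omega_Y+\sum_i(\pi^*\mu_i\wedge\overline{p^*\nu_i}+\overline{\pi^*\mu_i}\wedge p^*\nu_i)$. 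To see that $\omega_Y$ is genuinely a K\"ahler form, restrict this identity to a slice $Y\times\{w_0\}$: the term $p^*\omega_F$ and all the mixed terms vanish there (each mixed term carries a factor that is the pullback of a $1$-form on $F$ by the constant map $p|_{Y\times\{w_0\}}$), so $\omega_Y$ is identified with $\omega|_{Y\times\{w_0\}}$, which is positive since $\omega$ is. Finally, if $Y$ (resp. $F$) has zero irregularity there are no nonzero holomorphic $1$-forms on $Y$ (resp. $F$), every $\mu_i$ (resp. $\nu_i$) vanishes, and $\omega=\pi^*\omega_Y+p^*\omega_F$ is a genuine product.

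I expect the only genuinely non-formal point, and hence the main obstacle, to be the K\"ahler--Einstein uniqueness step when $c_1(F)>0$: there uniqueness holds only up to $\Aut_0(F)$, and one must rule out that the smoothly varying family $\{\phi|_{\{y\}\times F}\}_y$ traverses a nontrivial arc in an automorphism orbit of K\"ahler--Einstein potentials (Bando--Mabuchi) --- or, as happens in every application of this proposition in the present paper, one simply restricts attention to fibers with $c_1(F)\le 0$, where ordinary uniqueness suffices. A secondary, purely bookkeeping, point is verifying that the harmonic representative has precisely the claimed bidegree and reality structure under K\"unneth, which is routine Hodge theory on the product.
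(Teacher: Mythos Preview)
Your proof is correct and follows essentially the same route as the paper's: K\"unneth decomposition of $[\omega]$, the global $\partial\bar\partial$-lemma to produce the potential $\phi$, uniqueness of K\"ahler--Einstein metrics in a fixed class to force $\phi$ to be fiberwise constant, absorption of $\sqrt{-1}\partial\bar\partial\phi$ into the $Y$-factor, and verification that $\omega_Y$ is positive by restricting to a horizontal slice. Your caveat about the $c_1(F)>0$ case is well taken---the paper likewise cites only \cite{Calabi} and \cite{Yau} for uniqueness, so it too implicitly works under $c_1(F)\le 0$, which is all that is needed in the paper's applications (where $F$ is an elliptic curve).
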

\par
\begin{remark}
Under stronger curvature assumptions such as semi-negative sectional or bisectional curvature, there is a considerable amount of previous work that yields structural results stronger than ours. Moreover, our invariant $r_M$ is similar to the more standard {\it Ricci rank}, which comes with an associated {\it Ricci kernel foliation}. Our present results regarding holomorphic sectional curvature can be seen as complementary to those earlier results. We refer the reader to \cite{Zheng_surfaces_london}, \cite{Wu_Zheng_JDG}, \cite{Zheng_helv}, \cite{liu} for further details.
\end{remark}

The contents of the sections of this paper can be summarized as follows. In Section \ref{sec_def}, we recall the key definitions and establish basic properties, in particular the incompatibility statement Proposition \ref{incompatibility_thm}. In Section \ref{sec_proof}, we shall prove Theorem \ref{mthm}. In Section 4, we discuss implications of the Abundance Conjecture, which includes the statement of corollaries to Theorem \ref{mthm} in dimension no greater than three. In Section \ref{high_alb_dim}, we discuss the case of positive Albanese dimension, where our principal theorem is 
Theorem~\ref{hlw_thm1_2_gen_intro}
(repeated as Theorem~\ref{hlw_thm1_2_gen}).
In Section~\ref{high_kod}, we prove 
Theorem~\ref{mthm_kod_version_intro}
(repeated as Theorem~\ref{mthm_kod_version})
and the related Proposition \ref{prop_tot_geod} (repeated as Proposition \ref{key_proposition})  
of independent interest.   
In Section~\ref{structure}, we prove the above structural theorem on the decomposition of surfaces.

\begin{acknowledgement}
The first author would like to thank CRM/CIRGET and the D\'epartement de Math\'ematiques at the Universit\'e du Qu\'ebec \`a Montr\'eal for their hospitality during the preparation of this paper. The second author would like to thank NSERC for its financial support that allowed its write-up. He is also indebted to Hongnian Huang for discussions related to Proposition~\ref{metric_dec}. We thank Fangyang Zheng for pointing out an issue with an earlier definition of $r_M$ in a previous version of this paper.
\end{acknowledgement}

 \section{Basic definitions and properties}\label{sec_def}
Let $M$ be an $n$-dimensional manifold with local coordinates $z_1,\ldots,z_n$. Let
\begin{equation*}
g=\sum_{i,j=1}^n g_{i\bar j} dz_i\otimes d\bar{z}_j
\end{equation*}
be a hermitian metric on $M$. The components $R_{i\bar j k \bar l}$ of the curvature tensor $R$ associated with the metric connection are locally given by the formula
\begin{equation*}
R_{i\bar j k \bar l}=-\frac{\partial^2 g_{i\bar j}}{\partial z_k\partial \bar z _l}+\sum_{p,q=1}^n g^{p\bar q}\frac{\partial g_{i\bar p}}{\partial z_k}\frac{\partial g_{q\bar j}}{\partial \bar z_l}.
\end{equation*}
If $g$ is K\"ahler, the Ricci curvature takes a particularly nice form. In fact, we can define the \textit{Ricci
curvature form} to be
\begin{equation*}
\Ric=-\sqrt{-1}\partial\bar\partial \log\det(g_{i\bar{j}}).
\end{equation*}
By a result of Chern, the class of the form $\frac{1}{2\pi}\Ric$ is equal to $c_1(M)=c_1(-K_M)$, where $K_M$ is the canonical line bundle of $M$. \par
In Section \ref{high_kod}, we also use the symbol $\Ric$ to denote the curvature form of a hermitian metric on a line bundle.\par\par
The \textit{scalar curvature} $S$ of $g$ is defined to be the trace of 
$\Ric$ with respect to a unitary frame.\par
It follows from linear algebra and the
definition of scalar curvature that
$$
\Ric \wedge \omega^{n-1} = \tfrac {2}{ n} S\, \omega^n,
$$
where $\omega=\frac{\sqrt{-1}}{2}\sum_{i,j=1}^n
g_{i\bar{j}} \mathrm{d}z_i\wedge \mathrm{d}\bar{z}_{j}$ is
the \textit{(1,1)-form associated to $g$}. In situations where there are several spaces, metrics, and associated forms involved, the reader should assume that the unadorned symbols $g$ and $\omega$ pertain to $M$.
\par

If $\xi=\sum_{i=1}^n\xi_i \frac{\partial }{\partial z_i}$ is a non-zero complex tangent vector at $p\in M$, then the holomorphic sectional curvature $H(\xi)$ is given by
\begin{equation*}
H(\xi)=\left( 2 \sum_{i,j,k,l=1}^n R_{i\bar j k \bar l}(p)\xi_i\bar\xi_j\xi_k\bar \xi_l\right) / \left(\sum_{i,j,k,l=1}^ng_{i\bar j}g_{k\bar l} \xi_i\bar\xi_j\xi_k\bar \xi_l\right).
\end{equation*}
An important fact about holomorphic sectional curvature is the following. If $M'$ is a submanifold of $M$, then the holomorphic sectional curvature of $M'$ does not exceed that of $M$. To be precise, if $\xi$ is a non-zero tangent vector to $M'$, then
\begin{equation*}
H'(\xi)\leq H(\xi),
\end{equation*}
where $H'$ is the holomorphic sectional curvature associated to the metric on $M'$ induced by $g$. For a short proof of this inequality see \cite[Lemma 1]{Wu}. Basically, the inequality is an immediate consequence of the Gauss-Codazzi equation.\par
We have the following pointwise result due to Berger \cite{Berger} (see \cite{Hall_Murphy} for a recent new approach).
\begin{theorem}[\cite{Berger}]\label{berger_theorem}
Let $M$ be a compact manifold with a K\"ahler metric of semi-negative holomorphic sectional curvature. Then the scalar curvature function 
$S$ is also semi-negative everywhere on $M$. Moreover, let $p\in M$ and assume that there exists $w\in T_pM\backslash \{\vec{0}\}$ such that $H(w) < 0$. Then $S(p) < 0$.
\end{theorem}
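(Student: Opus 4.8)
The plan is to reduce everything to Berger's pointwise averaging identity: for every $p\in M$, the mean value of the holomorphic sectional curvature $H(\xi)$ over the unit sphere $\Sigma\subset T_pM$ equals a fixed \emph{positive} constant (depending only on $n=\dim M$) times the scalar curvature $S(p)$. Granting this, both assertions follow at once. If $H\le 0$ everywhere on $M$, then in particular the mean over $\Sigma$ is $\le 0$ at each $p$, hence $S(p)\le 0$. If moreover there is $w\in T_pM\setminus\{\vec 0\}$ with $H(w)<0$, then, since $H$ descends to a continuous function on $\Sigma$ that is everywhere $\le 0$, it is strictly negative on a nonempty open subset of $\Sigma$; as the remaining contribution is $\le 0$, the mean over $\Sigma$ is strictly negative, whence $S(p)<0$. (Compactness of $M$ plays no role; the statement is purely pointwise.)

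To prove the averaging identity, fix $p$ and choose holomorphic coordinates $z_1,\dots,z_n$ that are unitary at $p$, so $g_{i\bar j}(p)=\delta_{ij}$. Then for a \emph{unit} vector $\xi=\sum_i\xi_i\,\partial/\partial z_i$ the denominator in the definition of $H$ equals $1$, so $H(\xi)=2\sum_{i,j,k,l}R_{i\bar j k\bar l}(p)\,\xi_i\bar\xi_j\xi_k\bar\xi_l$. Integrate against the rotation-invariant probability measure on $\Sigma\cong S^{2n-1}\subset\CC^n$. That measure is invariant under each coordinate phase rotation $\xi_m\mapsto e^{\sqrt{-1}\theta_m}\xi_m$, so a monomial $\xi_i\xi_k\bar\xi_j\bar\xi_l$ has zero mean unless each index value occurs as often among $\{i,k\}$ as among $\{j,l\}$. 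The surviving terms are: those with $i=j=k=l$, contributing $R_{i\bar i i\bar i}\,|\xi_i|^4$; and, for $i\ne k$, the two terms $(j,l)=(i,k)$ and $(j,l)=(k,i)$, contributing $\bigl(R_{i\bar i k\bar k}+R_{i\bar k k\bar i}\bigr)|\xi_i|^2|\xi_k|^2$.

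Now invoke the standard sphere moments $\langle|\xi_i|^4\rangle=\tfrac{2}{n(n+1)}$ and $\langle|\xi_i|^2|\xi_k|^2\rangle=\tfrac{1}{n(n+1)}$ for $i\ne k$ (for instance via the Gaussian representation of the uniform measure on $S^{2n-1}$), together with the K\"ahler symmetry $R_{i\bar k k\bar i}=R_{i\bar i k\bar k}$. A short computation then yields
$$\langle H\rangle_{\Sigma}=\frac{4}{n(n+1)}\sum_{i,k}R_{i\bar i k\bar k}(p)=\frac{4}{n(n+1)}\,S(p),$$
the last equality being the definition of scalar curvature as the unitary-frame trace of the Ricci form recorded in Section~\ref{sec_def} (so that $\Ric_{i\bar j}=\sum_k R_{i\bar j k\bar k}$ and $S=\sum_{i,k}R_{i\bar i k\bar k}$ in such a frame). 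This is the desired identity, with positive constant $\tfrac{4}{n(n+1)}$.

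The only genuine work is bookkeeping: correctly identifying the monomials that survive the phase averaging, pinning down the two sphere moments, and matching the contraction $\sum_{i,k}R_{i\bar i k\bar k}$ to $S(p)$ consistently with the normalizations of Section~\ref{sec_def} (where $H$ carries an extra factor $2$ and $\Ric=-\sqrt{-1}\partial\bar\partial\log\det g$). None of this is deep, but a misplaced sign or a miscounted multiplicity would wreck the argument, so I would carry out the symmetrization explicitly in a unitary frame and then sanity-check the constant against $\PP^n$ with a Fubini--Study metric, where $H$ is a known positive constant and $S$ is classically known. I expect this normalization check to be the only real (if modest) obstacle.
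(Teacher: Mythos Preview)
Your proposal is correct and follows precisely the approach the paper indicates: the paper does not give its own proof of this cited result of Berger, but states explicitly that ``Berger's theorem is proven using a pointwise formula expressing the scalar curvature at a point in terms of the average holomorphic sectional curvature on the unit sphere in the tangent space at that point.'' Your argument is a faithful and correct elaboration of exactly this averaging identity, carried out in a unitary frame with the standard sphere moments and the K\"ahler symmetry $R_{i\bar k k\bar i}=R_{i\bar i k\bar k}$; the constant $\tfrac{4}{n(n+1)}$ is consistent with the paper's normalizations.
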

Berger's theorem is proven using a pointwise formula expressing the scalar curvature at a point in terms of the average holomorphic sectional curvature on the unit sphere in the tangent space at that point. Based on Berger's theorem, we have the following proposition.
\begin{proposition}\label{incompatibility_thm}
Let $M$ be a projective manifold whose first real Chern class is zero. Let $g$ be a K\"ahler metric on $M$ whose holomorphic sectional curvature is semi-negative. Then the holomorphic sectional curvature of $g$ vanishes identically and $M$ is an abelian variety up to a finite unramified covering.
\end{proposition}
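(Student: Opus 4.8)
The plan is to combine Berger's theorem with the Calabi--Yau theorem and a standard Bochner/harmonic-form argument. First I would invoke the hypothesis $c_1(M)_{\RR} = 0$ together with the Calabi--Yau theorem: since the first real Chern class vanishes, there exists a K\"ahler metric $g_0$ in the same K\"ahler class as $g$ (or in any fixed K\"ahler class) that is Ricci-flat. However, the curvature hypothesis is on $g$, not on $g_0$, so I cannot directly apply Berger to $g_0$. Instead I would argue as follows: by Berger's theorem (Theorem \ref{berger_theorem}), the semi-negativity of the holomorphic sectional curvature of $g$ forces the scalar curvature $S$ of $g$ to be $\leq 0$ everywhere. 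On the other hand, the total scalar curvature is (up to a positive constant) the cup product $c_1(M) \cdot [\omega]^{n-1}$ computed via $\Ric \wedge \omega^{n-1} = \tfrac{2}{n} S\, \omega^n$ and Chern's identification of $\tfrac{1}{2\pi}\Ric$ with $c_1(M)$. Since $c_1(M)_{\RR} = 0$, we get $\int_M S\, \omega^n = 0$. Combined with $S \leq 0$, this yields $S \equiv 0$ on $M$.

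Next, I would use the equality case of Berger's theorem: if $H(w) < 0$ for some $w \in T_p M \setminus \{\vec 0\}$, then $S(p) < 0$, contradicting $S \equiv 0$. Hence $H \equiv 0$ identically, which is the first assertion.

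It remains to deduce that $M$ is an abelian variety up to finite unramified cover. With $S \equiv 0$ and $c_1(M)_{\RR} = 0$, I would again apply the Calabi--Yau theorem to obtain a Ricci-flat K\"ahler metric in a fixed class; by Yau's theorem (or the Bogomolov--Beauville decomposition), a compact Ricci-flat K\"ahler manifold admits a finite \'etale cover splitting as a product of a complex torus, Calabi--Yau factors, and hyperk\"ahler factors. To rule out the nontrivial factors I would use that the curvature condition $H \equiv 0$ passes to submanifolds (the submanifold inequality $H' \leq H$ recalled in the excerpt) and to the \'etale cover, so the cover itself carries a K\"ahler metric — not necessarily the Ricci-flat one, but the pullback of $g$ — with $H \equiv 0$; a manifold with identically vanishing holomorphic sectional curvature has vanishing curvature tensor (since $H$ determines $R$), hence is flat, hence K\"ahler-flat, and by the Bieberbach-type theorem for compact K\"ahler-flat manifolds it is a complex torus up to finite \'etale cover; being projective, it is an abelian variety. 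Alternatively, and more cleanly, once $H \equiv 0$ one notes $R \equiv 0$ directly from the polarization identity expressing $R_{i\bar j k \bar l}$ in terms of values of $H$, so $(M,g)$ is flat and the classical structure theory of compact flat K\"ahler manifolds applies; projectivity upgrades the torus to an abelian variety.

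The main obstacle I anticipate is the bookkeeping in the last step: one must be careful that the curvature hypothesis we control is on the (possibly non-Ricci-flat) metric $g$, and the passage from "$H\equiv 0$ for $g$" to "$R\equiv 0$ for $g$" should be spelled out via the standard complex polarization formula recovering the curvature tensor from holomorphic sectional curvatures — this is where the phrase in the introduction, "the holomorphic sectional curvature completely determines the curvature tensor," does the real work. Once flatness of $g$ is in hand, the identification with an abelian variety up to finite \'etale cover is classical (Bieberbach for the flat K\"ahler case, plus projectivity).
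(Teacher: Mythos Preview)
Your proposal is correct and follows essentially the same route as the paper: Berger's theorem gives $S\leq 0$, the vanishing of $c_1(M)_\RR$ forces $\int_M S\,\omega^n=0$ and hence $S\equiv 0$, and the strict case of Berger then yields $H\equiv 0$; from there the polarization identity gives $R\equiv 0$, and the classical structure theory of flat compact K\"ahler manifolds (the paper cites Igusa \cite{Igusa}) finishes. Your initial detour through the Calabi--Yau theorem and the Bogomolov--Beauville decomposition is unnecessary, as you yourself note when you settle on the cleaner flatness argument---the paper goes straight to that.
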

\begin{proof}
Assume the holomorphic sectional curvature of $g$ does not vanish identically. Then there exists a point $p\in M$ and $w\in T_p M\backslash \{\vec{0}\}$ such that $H(w) < 0$. By Theorem \ref{berger_theorem}, the scalar curvature is non-positive everywhere, and $S(p) < 0$. Thus,
\begin{align}
0 & =2\pi \int _M c_1(-K_M) \wedge \omega^{n-1}\nonumber\\
&  =\int _M \Ric (g)\wedge \omega^{n-1}\nonumber\\
& =\int_M  \frac 2 n S\, \omega ^n <0 \nonumber,
\end{align}
which is a contradiction.\par
Having shown that the holomorphic sectional curvature of $g$ does vanish identically, it is immediate that $M$ is an abelian variety up to a finite unramified covering. Namely, it is a basic fact that the holomorphic sectional curvature of a K\"ahler metric completely determines the curvature tensor $R$ (\cite[Proposition 7.1, p. 166]{kobayashi_nomizu_ii}). In particular, if $H$ vanishes identically, then $R$ vanishes identically. However, due to \cite{Igusa}, a projective K\"ahler manifold with vanishing curvature tensor admits a finite unramified covering by an abelian variety.
\end{proof}
We conclude this section by defining the two notions of positivity of the canonical line bundle that appear in Theorem \ref{mthm}. Let $L$ be an arbitrary nef line bundle on $M$. Then the {\it numerical dimension of $L$}, which we denote $\nu(L),$ is $\max\{k\in\{0,1,\ldots,\dim M\}: (c_1^\RR(L))^k\not =  [0]\}$, where $c_1^\RR(L)$ denotes the first real Chern class of $L$. We write $\nu(M)$ for $\nu(K_M)$, the {\it numerical dimension of $M$} (aka the {\it numerical Kodaira dimension of $M$}). 
\begin{remark}\label{num_dim_zero_rkm}
It is immediate that $\nu(L)=0$, i.e., $c_1^\RR(L)=[0]$, implies that $L$ is numerically trivial. The converse also holds true, which is nicely explained in \cite[Remark 1.1.20]{Laz_I}.\par
\end{remark}
The notion of nef dimension is based on the following theorem (\cite{Tsuji}, \cite{8aut}).
\begin{theorem}
Let $L$ be a nef line bundle on a normal projective variety $M$. Then there exists an almost holomorphic dominant rational map $f:M\dashrightarrow Y$ with connected fibers, called a ``reduction map,'' such that
\begin{enumerate}
\item $L$ is numerically trivial on all compact fibers $F$ of $f$ with $\dim F = \dim M - \dim Y$, and
\item for every general point $x\in M$ and every irreducible curve $C$ passing through $x$ with $\dim f(C) > 0$, we have $L.C > 0$.
\end{enumerate}
The map $f$ is unique up to birational equivalence of $Y$.
\end{theorem}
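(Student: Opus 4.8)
The plan is to realize $f$ as the quotient of $M$ by the equivalence relation generated by connecting chains of \emph{$L$-trivial} curves, where an irreducible curve $C\subset M$ is called $L$-trivial if $L\cdot C=0$. Fix an ample line bundle $H$ on $M$. For each positive integer $d$, the $L$-trivial curves $C$ with $H\cdot C\leq d$ form a bounded family, parametrized by a finite-type closed subscheme $\mathcal{C}_d$ of a Chow variety, carrying a universal curve. Calling two points \emph{$d$-equivalent} if they lie on a connected chain of members of this family, I would invoke the Campana--Koll\'ar theory of quotients by covering families of cycles (as developed in Koll\'ar's \emph{Rational Curves on Algebraic Varieties} and in Campana's work on rational connectedness) to produce an almost holomorphic dominant rational map $f_d:M\dra Y_d$ with connected fibers whose general fiber through a very general point $x$ is precisely the $d$-equivalence class of $x$.

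Next I would let $d$ grow. Since enlarging $d$ can only coarsen the partition into equivalence classes, the dimension of the general fiber of $f_d$ is a nondecreasing, bounded sequence of integers, hence eventually constant; once the fiber dimension has stabilized, a new $L$-trivial curve through a very general point already lies in the existing fiber, so the maps $f_d$ themselves stabilize up to birational equivalence of the base. Set $f:=f_{d_0}$ and $Y:=Y_{d_0}$ for the stabilizing index $d_0$. By construction, the general fiber $F$ through a very general $x$ is the maximal subvariety containing $x$ that is swept out by chains of $L$-trivial curves.

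I would then verify the two properties. For (i): if $F$ is a general fiber with $\dim F=\dim M-\dim Y$ and $C\subset F$ is any irreducible curve, then $C$ is a member, or a specialization of a member, of the connecting family, so $L\cdot C=0$; since $L|_F$ is nef and pairs to zero with every curve in $F$, it is numerically trivial. For (ii): given a general point $x$ and an irreducible curve $C$ through $x$ with $\dim f(C)>0$, i.e. $C$ not contracted by $f$, we must have $L\cdot C>0$, for otherwise $C$ would be an $L$-trivial curve through $x$ and would be absorbed into the fiber through $x$ by maximality, forcing $f(C)$ to be a point. For uniqueness, observe that any map satisfying (i) and (ii) has, through a very general $x$, a fiber that both contains every $L$-trivial curve through $x$ by (ii) and consists only of $L$-trivial points by (i); hence its general fiber coincides with the intrinsically defined $L$-trivial equivalence class of $x$, so two such maps agree up to birational modification of the base.

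The main obstacle is the quotient construction underlying the first two paragraphs: establishing that the relation generated by $L$-trivial chains is algebraic and admits an almost holomorphic quotient, and that the chain-connecting process genuinely terminates after finitely many degree bounds rather than merely producing an increasing union of constructible loci. This is exactly where the boundedness of the Chow family, the semicontinuity of fiber dimension, and the Campana--Koll\'ar quotient machinery must be combined with care; the subtlety is that stabilization of the \emph{fiber dimension} does not formally imply stabilization of the \emph{map}, and ruling out a further merging of distinct general fibers by higher-degree $L$-trivial curves is the delicate point that the argument of \cite{8aut} is designed to control.
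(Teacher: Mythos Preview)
The paper does not prove this theorem at all: it is stated as a known result, with citations to \cite{Tsuji} and \cite{8aut}, and used as a black box in defining the nef dimension. So there is no ``paper's own proof'' to compare against.

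That said, your sketch is a reasonable summary of the strategy actually used in \cite{8aut}: one defines the equivalence relation generated by $L$-trivial curves, applies the Campana--Koll\'ar quotient machinery to bounded families of cycles of bounded $H$-degree, and shows that the resulting quotients stabilize. You have correctly identified the main technical issue---that stabilization of fiber dimension does not by itself force stabilization of the map---and you are right that this is where the work in \cite{8aut} lies. Your verification of properties (i) and (ii) is also on target, though in (i) one should be a bit more careful: not every curve in $F$ is literally a member or specialization of the chosen family, so one argues instead via a countability/Noetherian argument that a very general fiber is covered by $L$-trivial curves of bounded degree, and then uses nefness of $L$ to conclude numerical triviality on $F$. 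Overall your outline is correct in spirit and matches the cited literature, but since the paper under review simply quotes the result, there is nothing further to compare.
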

We call $\dim Y$ the {\it nef dimension of $L$}. When we apply the above theorem with $L=K_M$, we call $n(M):=\dim Y$ the {\it nef dimension of $M$}. 

\section{Proof of Theorem \ref{mthm}}\label{sec_proof}
\subsection{The non-existence of rational curves and the nefness of the canonical line bundle}\label{nefness}
If $K_M$ is not nef, then, by the bend and break technique of Mori, $M$ contains a rational curve, i.e., there exists a non-constant holomorphic map $\PP^1 \to M$. Thus, it only remains to show the non-existence of rational curves, which has been known at least since the 1970's (e.g., via the disk condition in Shiffman's \cite{Shiffman_ext} or by \cite[Corollary 2]{Royden_80}). We state the absence of rational curves in the following lemma, which actually is a slightly stronger result. Note in particular that we require the hermitian manifold $(M,h)$ to be neither complete nor K\"ahler. The first proof is of an analytic nature and perhaps preferable to some readers, since it avoids the use of saturations of subsheaves. The second proof relies on an integrated version of the Schwarz Lemma and thus is very much in line with the general theme of this paper.
\begin{lemma}\label{Shwarz}
Let $N$ be a compact Riemann surface of genus $\gamma$ and  $(M,h)$ a (not necessarily complete) hermitian manifold of semi-negative holomorphic sectional curvature. If $f:N\to M$ is a non-constant
holomorphic map, then $\gamma\geq 1$ and $\gamma=1$ if and only if $f$ is
a totally geodesic immersion inducing a flat metric on $N$.
\end{lemma}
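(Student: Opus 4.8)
The plan is to exploit the Schwarz-type inequality relating the pullback metric $f^*h$ on $N$ to the intrinsic geometry governed by the genus. First I would reduce to the universal cover: if $\gamma = 0$, then $N = \PP^1$, and composing $f$ with the universal covering $\CC \to N$ minus a point (or using that $\PP^1$ contains a copy of $\CC$) produces a non-constant holomorphic map $\CC \to M$; by the Ahlfors–Schwarz Lemma (the generalized Schwarz Lemma for semi-negative holomorphic sectional curvature, already invoked in the Introduction) the pullback of $h$ to $\CC$ must vanish identically, forcing $f$ constant — a contradiction. Hence $\gamma \geq 1$. The analytic proof of this step is the one alluded to in the excerpt as being "of an analytic nature"; alternatively, one may argue that any rational curve would have to have non-positive self-intersection and degree, contradicting the adjunction-based genus formula, but the Schwarz Lemma argument is cleaner and generalizes.

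Next, for the case $\gamma = 1$: here $N$ is an elliptic curve, so its universal cover is $\CC$ with the flat metric $g_0$, and $K_N$ is trivial. Pulling everything back to $\CC$, I would work with the function $u = f^*h / g_0$, i.e. the ratio of the pulled-back metric to the flat reference metric, which is a nonnegative bounded function on $\CC$ (bounded because $N$ is compact and $f$ is defined on all of $N$). The key computation is the Schwarz Lemma inequality: since the holomorphic sectional curvature of $(M,h)$ along the image is $\leq 0$ and the Gaussian curvature of the flat metric is $0$, one obtains a differential inequality of the form $\Delta \log u \geq 0$ wherever $u > 0$, i.e. $\log u$ is subharmonic on the (open, dense) set where $f$ is an immersion. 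A bounded subharmonic function on $\CC$ is constant, so $u$ is constant; since $f$ is non-constant, $u$ is a positive constant, which says precisely that $f^*h$ is a flat metric (a constant multiple of $g_0$) and that $f$ is an isometric immersion up to scale. Finiteness of the area $\int_N f^* \omega_h$ then forces $df$ to have no zeros, so $f$ is an immersion everywhere. Totally geodesic then follows from the equality case in the second fundamental form estimate: equality in the Gauss equation (forced by $f^*h$ being flat while the ambient holomorphic sectional curvature along $f(N)$ is $\leq 0$, hence must be identically $0$ along the image and the second fundamental form must vanish) gives that $f(N)$ is totally geodesic. Conversely, if $f$ is a totally geodesic immersion inducing a flat metric, then $N$ carries a flat Kähler metric, so $\gamma = 1$ by the classification of flat compact Riemann surfaces (Gauss–Bonnet).

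I expect the main obstacle to be the rigorous handling of the set where $df$ degenerates. The zero locus $Z = \{p \in N : df_p = 0\}$ is a proper analytic subset (finite), and on its complement the Schwarz computation goes through; the subtlety is to propagate the conclusion across $Z$. The clean way is to note that $\log u$ extends to a subharmonic function on all of $\CC$ (it tends to $-\infty$ at points over $Z$, which is consistent with subharmonicity, or one uses that $u$ itself is the squared norm of a holomorphic section-like object and apply a removable-singularity statement for the subharmonic extension), then apply the Liouville-type theorem for bounded subharmonic functions; once $u$ is constant it cannot be $-\infty$, so in fact $Z = \emptyset$. The second, sheaf-theoretic proof referenced in the excerpt sidesteps this by working with the saturation of the subsheaf $f^* \Omega_M \to \Omega_N$ and an integrated Schwarz Lemma on the resulting line bundle quotient, comparing degrees directly; there the analogue of the obstacle is the bookkeeping of the saturation, which is what that proof pays in exchange for avoiding the pointwise analysis.
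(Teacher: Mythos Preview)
Your treatment of the case $\gamma=0$ has a genuine gap. The Ahlfors--Schwarz Lemma you invoke requires the target to have holomorphic sectional curvature bounded above by a \emph{strictly negative} constant; under a merely semi-negative hypothesis it is simply false that every holomorphic map $\CC\to M$ is constant---take $M$ to be a flat torus and the universal covering map. The passage in the Introduction you cite is stated only for negative curvature. So restricting $f$ to $\CC\subset\PP^1$ and appealing to Ahlfors--Schwarz does not produce a contradiction, and your argument for ruling out $\gamma=0$ does not go through.

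The paper's analytic proof avoids this by working directly on the compact surface $N$ rather than on a universal cover, treating all genera at once. One equips $N$ with its constant-curvature K\"ahler--Einstein form $\omega_{KE}$ of scalar curvature $S=2-2\gamma$, sets $u=f^*\omega/\omega_{KE}$, and computes $\sqrt{-1}\,\partial\bar\partial\log u=S\,\omega_{KE}-k\,f^*\omega$ with $k\le0$ by the curvature-decreasing property. When $\gamma\le1$ one has $S\ge0$, so $\log u$ is subharmonic on the \emph{compact} $N$, hence constant; thus $u$ is a positive constant, $df$ is nowhere zero, and $S-ku=0$. For $\gamma=0$ this forces $k>0$, contradicting $k\le0$; for $\gamma=1$ it gives $k\equiv0$ and the totally geodesic flat immersion. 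Your $\gamma=1$ argument via Liouville on $\CC$ is essentially this same computation lifted to the cover and is fine, including your handling of the zeros of $df$. But it is the compactness of $N$, not a Liouville theorem on $\CC$, that makes the $\gamma=0$ case work: there the natural reference metric has positive curvature, and the contradiction comes from the sign of $S$, not from constancy of entire maps.
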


\begin{proof}[Proof of Lemma \ref{Shwarz} using analytic methods]
We write $\omega$ for the $(1,1)$-form of the hermitian manifold $(M,h)$ and $\omega_{KE}$ for the $(1,1)$-form of the 
K\"ahler-Einstein metric on $N$ with constant scalar curvature
$S=2-2\gamma$. Then
$$u:=\frac{f^*\omega}{\omega_{KE}}$$
is a non-negative smooth function on $N$ that is not identically
vanishing (otherwise $df$ would be identically zero and thus 
$f$ would be constant). Consequently, $df$ has at most a finite number
of zeros. Outside of these zeros,  $u$ is positive, and we have 
$$\sqrt{-1}\partial\overline \partial \log u = S\omega_{KE} - k f^*\omega, $$
where $k$ is the holomorphic sectional curvature of the K\"ahler
form $f^*\omega$ induced by $h$. Due to the curvature decreasing property of subbundles and submanifolds, we have $k\leq 0$. We
may assume that $S\geq 0$ since the theorem is vacuous otherwise.
We see then that $\log u$ is a subharmonic function and hence is
constant since $N$ is compact. This means that $u$ is a positive
constant function and hence that $df$ has no zeros. It also implies
that $S-ku=0$, forcing both $S=0$ and the everywhere vanishing
of $k$. We have thus shown that $\gamma= 1$ and that $f$ is a totally geodesic immersion
inducing a flat metric $f^*\omega$ on $N$. Since the converse of the last statement of the 
lemma is clear, the proof is now complete.
\end{proof}

\begin{proof}[Proof of Lemma \ref{Shwarz} using an integrated version of the Schwarz Lemma] We abuse notation and denote 
(holomorphic) vector bundles
and their sheaves of (holomorphic) sections by the same symbols.
Let $L$ be the saturation of the rank one subsheaf of $f^{-1}TM$
over $N$ given by the image of $TN$ via the differential of $f$ 
naturally given as section $df$ of $Hom_{N}(TN,f^{-1}TM)$.
Then $L$ is a subbundle of $f^{-1}TM$ with an induced 
hermitian metric $h_L$ and $df$ identifies with a section
$s$ of the line bundle $Hom_N(TN,L)$. Since $L$ is holomorphically
identified with $TN$ via $s$ over the dense open subset $N_0$
of $N$
where $s\not=0$ and since, over $N_0$, $h_L=f^*h$ is the induced metric 
on $T{N_0}$, the metric $h_L$ has semi-negative curvature on $N$ by the curvature decreasing property. Hence, the result follows from
$$2-2\gamma=\deg TN\leq \deg TN + \deg (s) =\deg L
=\int_N c_1(L, h_L)\leq 0$$
(which forces equality in the case $\gamma=1$) and from the 
Gauss-Codazzi equation.
\end{proof}

\subsection{The case of vanishing holomorphic sectional curvature}\label{flat_case_section}  If the holomorphic sectional curvature of $M$ vanishes, then the argument at the end of the proof of Proposition \ref{incompatibility_thm} shows that $M$ must be an abelian variety up to a finite unramified covering.

\subsection{Positivity of the numerical dimension when H does not vanish identically}
Next, we show that the numerical dimension of $M$ is positive when H does not vanish identically. To this end, let us assume that the numerical dimension of $M$ is zero. By definition, this means that the first real Chern class of $F$ is trivial. By Proposition \ref{incompatibility_thm}, this implies that the holomorphic sectional curvature vanishes identically, which is a contradiction to our assumption.\par

\subsection{The bound on the nef dimension}\label{nefd}
Now, we prove that the nef dimension $n(M)$ is greater than or equal to $r_M$. To this end, let us denote by $f:M\dashrightarrow Y$ a nef reduction map with respect to $K_M$, and let $I\subset M$ denote the set of points of indeterminacy of $f$. We write $f_h$ for the holomorphic map $f|_{M\backslash I}$.\par
Now, let us assume that $n(M) < r_M$ and derive a contradiction. Since $M$ is smooth, we can apply the generic smoothness theorem \cite[Corollary III.10.7]{Hartshorne} and conclude that there exists an open and dense subset $V\subset Y$ such that $f_h:f_h^{-1}(V)\to V$ is a smooth submersion. Since a smooth submersion is an open map, the set 
$$\tilde V := f_h(\{p\in M\ |\ \eta(p)=\eta_M \}\backslash I)$$ 
is a non-empty open subset (in the classical topology) of $V$. We pick a point $y\in \tilde V$ such that the fiber $F$ of $f_h$ over $y$ is compact and has the expected dimension $\delta:= n-n(M) > n-r_M$. \par
By the adjunction formula, $K_M|_F=K_F$, so $K_F$ is numerically trivial. By \cite[Remark 1.1.20]{Laz_I}, this implies that $c_1^\RR(K_F) = [0]$. Now, let $p\in F$ be such that $\eta(p)=\eta_M$. Due to $\delta > n - r_M =\eta_M$, there exists a nonzero vector $w$ in $T_pF \subset T_pM$ with $H(w)< 0$. Due to the curvature decreasing property mentioned in Section \ref{sec_def}, the holomorphic sectional curvature $H'$ of the K\"ahler metric $g'$ induced on $F$ by $g$ satisfies $H'(w) < 0$. Thus, $H'$ is semi-negative and does not vanish identically. By Proposition \ref{incompatibility_thm}, we have obtained a contradiction.

\section{Remarks related to the Abundance Conjecture} \label{abund_sec}
On a projective manifold $M$ with nef canonical line bundle $K_M$, the following chain of inequalities holds:
\begin{equation} \kod(M)\leq \nu(M) \leq n(M).\label{chain_of_inequ}\end{equation}
The first inequality was established by Kawamata in \cite[Proposition 2.2]{Kawamata_85_pluri_sys} and the second inequality is in \cite[Proposition 2.8]{8aut}. The name Abundance Conjecture is commonly used to refer to the claim that the first inequality is actually an equality, i.e., that Kodaira dimension and numerical dimension of $M$ agree (\cite[Conjecture 7.2]{Kawamata_85_pluri_sys}). \par
Furthermore, it is known that the Abundance Conjecture actually implies $\kod(M)= n(M)$, making \eqref{chain_of_inequ} an all-around equality. The argument for this goes as follows.\par
 If $n(M)=0$, then $0\leq \nu(M)\leq n(M)=0$ implies $\nu(M) =0$. Due to the Abundance Conjecture (which is actually \cite[Theorem 8.2]{Kawamata_85} in this case), $\kod(M)=\nu(M)=0=n(M)$.\par
If $n(M)>0$, then we observe first of all that $\nu(M) > 0$ also (by the definitions). The Abundance Conjecture now implies $\kod(M)=\nu(M) > 0$. For our purposes, it is convenient to think of the Kodaira-Iitaka map as represented by the map furnished by a large enough multiple of $K_M$ as described in \cite[Theorem 2.1.33]{Laz_I}. Due to the semi-ampleness of $K_M$ established in \cite[Theorem 1.1]{Kawamata_85_pluri_sys}, this map is holomorphic. A generic fiber $F$ has $\kod(F)=0$ and thus, again by the Abundance Conjecture, $\nu(F)=0$. Therefore, by construction of the nef reduction map, $\kod (M) \geq n(M)$. Together with \eqref{chain_of_inequ}, we obtain the desired equality. In fact, what we have seen is that the map furnished by a large enough multiple of $K_M$ can serve as a representative of both the nef reduction map with respect to $K_M$ and the Kodaira-Iitaka map (both of which are only defined up to birational equivalence) if the Abundance Conjecture is valid.\par
Due to the above, an immediate corollary to Theorem \ref{mthm} is the following.
\begin{corollary}\label{3d_cor_1}
Let $M$ be a projective manifold with a K\"ahler metric of semi-negative holomorphic sectional curvature. If $M$ satisfies the Abundance Conjecture, which is 
the case if its dimension is no greater than three,
then the Kodaira dimension of $M$ satisfies
$$\kod(M) \geq r_M.$$
\end{corollary}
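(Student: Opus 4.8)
The plan is to read off the statement from Theorem~\ref{mthm} together with the chain \eqref{chain_of_inequ} and the implications of the Abundance Conjecture recorded at the start of this section. First I would treat separately the case in which the holomorphic sectional curvature of the given metric vanishes identically. In that case Theorem~\ref{mthm} asserts that $M$ is an abelian variety up to a finite unramified covering, so $\kod(M)=0$; and by the very definition of $r_M$ the identical vanishing of $H$ is equivalent to $r_M=0$. Hence $\kod(M)\ge r_M$ holds trivially (with equality).

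In the complementary case $H$ does not vanish identically, Theorem~\ref{mthm} tells us that $K_M$ is nef and that $n(M)\ge r_M\ge 1$. Since $M$ is assumed to satisfy the Abundance Conjecture, the discussion at the beginning of this section shows that the inequalities \eqref{chain_of_inequ} are in fact all equalities, so $\kod(M)=\nu(M)=n(M)$. Combining the two displayed facts gives $\kod(M)=n(M)\ge r_M$, as desired. For the parenthetical claim it suffices to recall, as in the introduction, that the Abundance Conjecture (in the strong form needed here, i.e.\ up to dimension $\dim M$) is a theorem when $\dim M\le 3$ by the work of Miyaoka and Kawamata.

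There is no real obstacle: the content lies entirely in Theorem~\ref{mthm}, and what remains is formal. The only place meriting a moment's attention is the bookkeeping in the vanishing case, where one must check that both sides of the asserted inequality are zero so that the conclusion is not vacuous, and that only nefness of $K_M$ — and not some unproven positivity — is being invoked.
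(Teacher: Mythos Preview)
Your proof is correct and follows exactly the route the paper intends: the paper presents the corollary as ``immediate'' from Theorem~\ref{mthm} together with the discussion establishing that the Abundance Conjecture forces $\kod(M)=n(M)$, and you have simply spelled this out, including the harmless case split for identically vanishing $H$.
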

In particular, a projective manifold $M$ of dimension $n\leq 3$ with a K\"ahler metric of semi-negative holomorphic sectional curvature satisfying $r_M=n$ is of general type. Moreover, as we saw in Section \ref{nefness}, $M$ contains no rational curves, so that based on \cite{Kawamata_85}, we obtain the following improvement of \cite[Theorem 1.1]{heier_lu_wong_mrl}.

\begin{corollary}\label{3d_cor_2}
Let $M$ be a projective manifold of dimension $n=r_M$ satisfying
the same hypotheses as in the above corollary. 
Then $K_M$ is ample.
\end{corollary}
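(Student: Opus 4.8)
The plan is to chain together three inputs that have already been assembled: Corollary \ref{3d_cor_1}, the absence of rational curves from Theorem \ref{mthm}(i), and Kawamata's theorem from \cite{Kawamata_85}. First I would invoke Corollary \ref{3d_cor_1}, which applies verbatim since the hypotheses of the present corollary include those of the previous one: under semi-negative holomorphic sectional curvature and the validity of Abundance in dimension $n\leq 3$, we get $\kod(M)\geq r_M$. By assumption $n=r_M$, so $\kod(M)\geq n$, and since the Kodaira dimension of an $n$-dimensional manifold never exceeds $n$, we conclude $\kod(M)=n$, i.e.\ $M$ is of general type.

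Next I would recall from Section \ref{nefness} (the first part of Theorem \ref{mthm}) that a projective manifold carrying a K\"ahler metric of semi-negative holomorphic sectional curvature contains no rational curves. Thus $M$ is simultaneously a manifold of general type and a manifold free of rational curves. At this point I would apply \cite{Kawamata_85}, which asserts precisely that a projective manifold of general type with no rational curves has ample canonical bundle: the canonical model map contracts only curves on which $K_M$ is trivial, and since $K_M$ is nef and big these contracted curves would have to be rational, contradicting their nonexistence. Hence $K_M$ is ample, which is the assertion.

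The only point that needs a word of care is that the hypothesis ``same hypotheses as in the above corollary'' genuinely entails $n\leq 3$ (or, more generally, the validity of Abundance for $M$), since that is what licenses the use of Corollary \ref{3d_cor_1}; in the stated dimension range this is the theorem of Miyaoka and Kawamata already cited in the introduction. There is essentially no obstacle here: every ingredient is quoted from earlier in the paper or from the literature, and the argument is a two-line deduction. If anything, the ``hard part'' is purely expository --- making sure the reader sees that $r_M=n$ forces equality in Corollary \ref{3d_cor_1}(rather than merely a lower bound) and that nonexistence of rational curves is exactly the extra hypothesis needed to upgrade ``general type'' to ``ample canonical bundle'' via \cite{Kawamata_85}.
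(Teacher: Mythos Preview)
Your proposal is correct and follows essentially the same route as the paper: apply Corollary~\ref{3d_cor_1} to get $\kod(M)\geq r_M=n$, hence $M$ is of general type, then combine the absence of rational curves from Theorem~\ref{mthm} with \cite{Kawamata_85} to conclude that $K_M$ is ample. One small cosmetic point: the nonexistence of rational curves is stated in the opening sentence of Theorem~\ref{mthm} (proved in Section~\ref{nefness}), not in item (i), which concerns the numerical dimension; your second paragraph gets this right, but the first paragraph's reference to ``Theorem~\ref{mthm}(i)'' should be adjusted.
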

In light of this, it is rather clear that Conjecture \ref{yau_conj} should be generalized as follows.
\begin{conjecture}\label{improv_conj}
Let $M$ be a projective manifold with a K\"ahler metric of semi-negative holomorphic sectional curvature. 
 Then the Kodaira dimension of $M$ satisfies $\kod(M) \geq r_M.$ In particular, if $r_M=\dim M$, then the canonical line bundle $K_M$ is ample.
\end{conjecture}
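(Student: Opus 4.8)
\emph{Reduction to the non-flat case.} As this is a conjecture, I can only outline a strategy. If $H$ vanishes identically, then by the last assertion of Theorem~\ref{mthm} (equivalently the argument closing the proof of Proposition~\ref{incompatibility_thm}) $M$ is an abelian variety up to a finite unramified covering, so $\kod(M)=0=r_M$. Assume henceforth $H\not\equiv 0$; then $r_M\geq 1$, and by Theorem~\ref{mthm} the canonical bundle $K_M$ is nef with $\nu(M)>0$ and $n(M)\geq r_M$. The whole content of the conjecture is then to upgrade the nef-dimension bound $n(M)\geq r_M$ to the Kodaira-dimension bound $\kod(M)\geq r_M$, thereby bridging, against the value $r_M$, the gap in the chain $\kod(M)\leq\nu(M)\leq n(M)$ recorded in \eqref{chain_of_inequ}.

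\emph{Sufficient inputs.} Granting the Abundance Conjecture one is done at once, since then $\kod(M)=\nu(M)=n(M)\geq r_M$ (Corollary~\ref{3d_cor_1}). Unconditionally, one combines the partial results of the paper: Theorem~\ref{hlw_thm1_2_gen_intro} handles the case in which the Albanese dimension $d$ exceeds $n-4$, and Theorem~\ref{mthm_kod_version_intro} the case $\kod(M)\geq n-e$ with $e=3$ the range in which abundance is presently known. What is left is the regime $d\leq n-4$ together with $\kod(M)<n-3$; the smallest dimension in which anything remains open is $n=4$.

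\emph{A concrete handle on the residual regime.} Let $f\colon M\dra Y$ be a nef reduction map for $K_M$, so $\dim Y=n(M)\geq r_M$ and, on a general fibre $F$ of the expected dimension $n-\dim Y$, the class $K_M|_F$ is numerically trivial. By adjunction $K_F\equiv 0$, so $F$ has vanishing first real Chern class, and $F$ inherits a K\"ahler metric of semi-negative holomorphic sectional curvature by the curvature-decreasing property; hence by Proposition~\ref{incompatibility_thm} (or by Proposition~\ref{prop_tot_geod} applied to the inclusion $F\hookrightarrow M$) the induced holomorphic sectional curvature vanishes identically and $F$ is an abelian variety up to a finite unramified covering, with $F\hookrightarrow M$ totally geodesic and isometric for the flat metric. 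In particular $\kod(F)=0$ and $F$ has a good minimal model, so the corresponding known case of the Iitaka conjecture $C_{n,m}$ (via Viehweg's weak positivity) yields $\kod(M)\geq\kod(Y)$. The conjecture is thereby reduced to the single inequality $\kod(Y)\geq r_M$ for the base of the nef reduction; note that $\kod(Y)=\dim Y=n(M)$ would in fact give the sharper conclusion $\kod(M)=n(M)$.

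\emph{The main obstacle.} The difficulty is concentrated in this last reduction: $Y$ arises only as an almost-holomorphic image of $M$ and carries no evident metric of its own. One expects that along the totally geodesic flat fibres of $f$ the metric of $M$ splits — the arbitrary-dimensional analogue of Proposition~\ref{metric_dec} and of Theorem~\ref{structure_thm_surface}(ii) — so that, after a finite \'etale cover, $M$ becomes a product $Y'\times F'$ with $F'$ abelian, $Y'$ of general type and $\dim Y'=r_M$, whence $\kod(M)\geq\kod(Y')=r_M$. Establishing such a splitting is exactly the point at which semi-negative holomorphic sectional curvature, unlike semi-negative Ricci or holomorphic bisectional curvature, provides no ready-made foliation with flat leaves (cf.\ the Remark following Proposition~\ref{metric_dec}); the null distribution of $H$ need not even be smooth. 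Realistic intermediate goals are therefore to prove the splitting, and hence the conjecture, for real-analytic metrics, and to settle the first open dimension $n=4$.
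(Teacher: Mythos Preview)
This statement is labeled a Conjecture in the paper and is not proved there; the paper establishes it only under additional hypotheses (Corollary~\ref{3d_cor_1}, Theorems~\ref{hlw_thm1_2_gen_intro} and~\ref{mthm_kod_version_intro}, and the surface case Theorem~\ref{structure_thm_surface}). You correctly recognize this and offer a strategic outline rather than a proof, and your survey of the known partial results and of the residual regime is accurate and fully in line with the paper's own discussion.

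Two comments on the speculative portion. First, in your ``concrete handle'' paragraph you invoke the known case of $C_{n,m}$ for fibres admitting a good minimal model; to make this precise one must first resolve the indeterminacy of the almost-holomorphic nef reduction map to obtain an honest algebraic fibre space $\tilde M\to Y$ with $Y$ smooth and general fibre birational to $F$---a routine step, but one that should be stated before Kawamata's subadditivity is applied. Second, in your ``main obstacle'' paragraph you write that the expected splitting would yield $M\simeq Y'\times F'$ with $\dim Y'=r_M$ and $Y'$ of general type. But the fibre of the nef reduction $f$ has dimension $n-n(M)$, so a splitting along $f$ gives $\dim Y'=n(M)\geq r_M$; the equality $n(M)=r_M$ is itself not known in general (cf.\ the Remark following Theorem~\ref{mthm}). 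Moreover, asserting that $Y'$ is of general type essentially re-invokes the conjecture on $Y'$ (or appeals to the strictly-negative Wu--Yau theorem recorded only in the paper's closing remark), so this step, as written, is not yet a reduction to something already known.
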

We will see in the next section that if $M$ is of high enough Albanese dimension, then Conjecture \ref{improv_conj} holds true for $M$ (without any use of the Abundance Conjecture). 
In Section~\ref{high_kod}, we will prove the conjecture for the case
when $\kod(M)$ is high enough.\par
We conclude this section by mentioning that a proof of the Abundance Conjecture was announced in \cite{Siu_AC}, although complete details of this proof seem to be not yet available.

\section{Manifolds of positive Albanese dimension}\label{high_alb_dim}
For projective manifolds of positive Albanese dimension, we actually can  prove some versions of Conjecture \ref{improv_conj}. Our results are as follows.
\begin{theorem}\label{mthm_max_alb_dim_version}
Let $M$ be an $n$-dimensional projective manifold whose Albanese dimension is maximal, i.e., equal to  $n$. Let $M$ possess a K\"ahler metric of semi-negative holomorphic sectional curvature. Then $$\kod(M) \geq r_M.$$
\end{theorem}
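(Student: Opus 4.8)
The plan is to argue by contradiction: assuming $\kod(M)<r_M$, I would exhibit an abelian variety of dimension $n-\kod(M)>\eta_M=n-r_M$ together with a generically finite map to $M$ under which the pulled-back metric is forced to be flat, which is impossible once one unwinds the definition of $\eta_M$. To this end I would first pass to the Iitaka fibration. Let $g\colon M\dra W$ be the Iitaka--Kodaira map, so $\dim W=\kod(M)$, and let $\mu\colon M'\to M$ be a resolution of its indeterminacy, lifting $g$ to a morphism $g'\colon M'\to W$ with connected fibers between smooth projective varieties. For $w$ in a dense Zariski-open subset $W_0\subseteq W$, the fiber $F':=(g')^{-1}(w)$ is a smooth irreducible projective variety with $\kod(F')=0$ which is not contained in the $\mu$-exceptional locus, so that $\mu$ restricts to a birational morphism $F'\to F:=\mu(F')\subseteq M$ with $\dim F=n-\kod(M)$. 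The crucial point is that $F'$ also has maximal Albanese dimension: since $M'$ is birational to $M$ its Albanese morphism $\alpha'\colon M'\to A:=\Alb(M')$ is generically finite, so the locus $V\subseteq M'$ on which the fibers of $\alpha'$ are zero-dimensional is dense open; a general $F'$ meets $V$ in a dense open subset of $F'$, and for $x$ general in $F'\cap V$ the fiber of $\alpha'|_{F'}$ through $x$ is contained in the finite set $\alpha'^{-1}(\alpha'(x))$, whence $\alpha'|_{F'}$ is generically finite onto its image. By Kawamata's characterization of abelian varieties (maximal Albanese dimension together with vanishing Kodaira dimension forces birationality to an abelian variety), $F'$ is birational to the abelian variety $B:=\Alb(F')$.

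Next I would feed $B$ into the integrated Schwarz lemma. Consider the composite rational map $f\colon B\dra F'\to F\subseteq M$, whose first arrow is the inverse of the birational Albanese morphism of $F'$ and whose second arrow is $\mu|_{F'}$; this $f$ is generically finite with image $F$. It is essential here to route through $B$ rather than through $F'$ or $F$ themselves, since those may acquire rational curves, respectively singularities, and thereby fail to have pseudo-effective anti-canonical class, whereas $B$ is a smooth projective variety with $-K_B$ trivial, hence pseudo-effective and Cartier. Proposition~\ref{prop_tot_geod}, applied with $N=B$, then tells us that $f$ is in fact a holomorphic immersion and that $f^{*}g$ is a flat Kähler metric on $B$; in particular the holomorphic sectional curvature of $f^{*}g$ vanishes identically on $B$.

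Finally I would derive the contradiction from the definition of $\eta_M$. As $w$ ranges over $W_0$ the subvarieties $F=F_w$ sweep out a dense open subset of $M$, which meets the non-empty open set $\{p\in M:\eta(p)=\eta_M\}$; so I may choose $w$ (still generic enough for the above) and a point $b\in B$ whose image $q:=f(b)$ satisfies $\eta(q)=\eta_M$. The subspace $df(T_bB)\subseteq T_qM$ has dimension $n-\kod(M)>n-r_M=\eta_M=\eta(q)$, so by the very definition of $\eta(q)$ it contains a nonzero vector $v$ with $H(v)<0$. Since $f$ is an isometric holomorphic immersion, the curvature-decreasing property recalled in Section~\ref{sec_def} bounds the holomorphic sectional curvature of $f^{*}g$ at $b$ in the direction corresponding to $v$ from above by $H(v)<0$, contradicting the flatness of $f^{*}g$ established above. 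Hence $\kod(M)\ge r_M$.

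The step I expect to be the main obstacle is the passage from the Iitaka fibration to Proposition~\ref{prop_tot_geod}: one must simultaneously verify that the general Iitaka fiber inherits maximal Albanese dimension and, more delicately, manage the chain of birational maps so that the proposition is invoked on a genuinely smooth variety with pseudo-effective anti-canonical divisor. The naive idea of applying it to the fiber itself fails, because a smooth birational model of an abelian variety can carry rational curves and then have non-pseudo-effective $-K$, which is precisely why the argument has to be pushed onto $\Alb(F')$. Everything else — the resolution of indeterminacy, the various genericity statements, the upper-semicontinuity of $\eta$, and the Gauss--Codazzi curvature-decreasing inequality — is routine, and Kawamata's characterization of abelian varieties is the only external ingredient.
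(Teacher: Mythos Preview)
Your argument is correct, but it follows a different route from the paper's. The paper never invokes Proposition~\ref{prop_tot_geod} here; instead it shows that the image $\sigma(G)\subset M$ of a general Iitaka fiber is itself an abelian variety sitting inside $M$ (using Ueno's theorem that the image in the Albanese is a translate of an abelian subvariety, Kawamata--Viehweg's characterization, and Abhyankar together with the absence of rational curves to upgrade ``birational to abelian'' to ``isomorphic to abelian''), and then applies the elementary Proposition~\ref{incompatibility_thm} to $\sigma(G)$ as a submanifold. Your route sidesteps all of that algebraic geometry by passing to $B=\Alb(F')$ and invoking the much heavier Proposition~\ref{prop_tot_geod}, which handles the rational map $B\dra M$ directly without requiring the image to be smooth or abelian. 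So your approach is cleaner once Proposition~\ref{prop_tot_geod} is in hand, while the paper's is more self-contained (Proposition~\ref{prop_tot_geod} is only proved in Section~\ref{high_kod}, after this theorem) and rests on the simpler incompatibility statement. One small omission: you should state at the outset that maximal Albanese dimension forces $\kod(M)\ge 0$ (pull back a top holomorphic form from the Albanese), since otherwise the Iitaka map is undefined and your contradiction argument never starts.
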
 

\begin{proof}
By the definition of $M$ being of maximal Albanese dimension, $M$ possesses a generically finite map to an abelian variety $A$, namely its Albanese map $a$. Thus, the Kodaira dimension $\kod(M) \geq 0$, since one can pull back a not identically zero holomorphic $n$-form from the abelian variety under $a$, which will yield a not identically zero holomorphic $n$-form on $M$. \par
To warm up (and avoid trivialities in the treatment of the general case) we first deal with the case when $\kod(M)=0$. By \cite[Theorem 1]{kawamata_81}, the Albanese map $a:M\to A$ is a fiber space. Since the Albanese dimension of $M$ is maximal, the map $a$ is generically finite. Thus, we have so far established that $a$ is a birational holomorphic map. Since the Albanese torus $A$ is smooth, the exceptional set of $a$ is covered by rational curves due to \cite{Abh}. Since there are no rational curves on $M$, $a$ is injective. As an injective and onto holomorphic map between manifolds, $a$ is an isomorphism and $M$ is an abelian variety. By Proposition \ref{incompatibility_thm}, $r_M=0$, and the theorem is proven in the case $\kod(M)=0$.\par
We now treat the general case $\kod(M)>0$. Let $\pi:M^*\to Y^*$ be a holomorphic version of the Kodaira-Iitaka map of $M$ and $\sigma: M^*\to M$ the pertaining modification of $M$. We have a diagram 
\begin{equation*}\begin{CD}
M^* @>\pi>> Y^*\\
@ V \sigma  VV \\
M@>a>> A\\
\end{CD}.
\end{equation*}\par 
\ \par
Let $G$ be a general fiber of $\pi$. Such a $G$ is a submanifold of $M^*$ of dimension $n-\kod(M)$ with $\kod(G)=0$. The map $a\circ\sigma |_G:G \to (a\circ\sigma)(G)$ is a generically finite holomorphic map. We let $B:=(a\circ\sigma)(G)$. Due to \cite[Corollary 9]{kawamata_81}, $0=\kod(G) \geq \kod(B)$. Moreover, \cite[Lemma 10.1]{Ueno} yields $\kod(B) \geq 0$, so $\kod(B) = 0$. By \cite[Theorem 10.3]{Ueno}, $B$ is the translate of an abelian subvariety $A_0$ of $A$. Since there are only countably many abelian subvarieties of $A$, we can assume that $A_0$ does not depend on $G$. We write $p:A\to A/A_0$ for the canonical projection.\par
Next, we observe that the map $\sigma |_G:G \to \sigma(G)$ is a birational holomorphic map. Now, note that $A/A_0$ is again an abelian variety and consider the map $p\circ a : M \to A/A_0$. An irreducible component of a general fiber of $p\circ a$ will be of the form $\sigma(G)$, where $G$ is a general fiber of $\pi$. Due to generic smoothness, $\sigma(G)$ will be smooth, and due to birational invariance, $\kod (\sigma(G)) = 0$. By definition of the Kodaira-Iitaka map, the dimension of $\sigma(G)$ is $n-\kod(M)$. In fact, $\sigma(G)$ is an abelian variety for the following reason. In our situation, it follows from \cite[Main Theorem]{Kawamata_Viehweg_char_ab_var} that $\sigma(G)$ is birational to an abelian variety. Since its image $(a\circ\sigma)(G)$ is smooth, the same argument as above based on \cite{Abh} shows that $\sigma(G)$ is isomorphic to an abelian variety.\par
To conclude the proof, assume that the general fiber $G$ is chosen such that the abelian variety $\sigma(G)$ has non-empty intersection with the open set $\{p\in M\ |\ \eta(p)=\eta_M\}$. By the curvature decreasing property and Proposition \ref{incompatibility_thm}, $\dim \sigma(G) =n-\kod(M)$ can be no greater than $\eta_M$, i.e., $\kod(M) \geq n-\eta_M=r_M$.\end{proof}
The above Theorem \ref{mthm_max_alb_dim_version} states in particular that a projective manifold $M$ of maximal Albanese dimension with a K\"ahler metric of semi-negative holomorphic sectional curvature satisfying $r_M=\dim M$ is of general type. The established absence of rational curves, together with \cite{Kawamata_85}, then yields the following special case of Conjecture \ref{improv_conj}.
\begin{corollary}\label{r_max_and_alb_max}
Let $M$ be a projective manifold with a K\"ahler metric of semi-negative holomorphic sectional curvature. Assume that $M$ is of maximal Albanese dimension and that $r_M=\dim M$. Then the canonical line bundle of $M$ is ample.
\end{corollary}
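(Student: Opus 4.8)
The plan is to deduce the corollary from Theorem~\ref{mthm_max_alb_dim_version}, the absence of rational curves recorded in Section~\ref{nefness}, and Kawamata's ampleness result from \cite{Kawamata_85}. First I would apply Theorem~\ref{mthm_max_alb_dim_version} verbatim: since $M$ has maximal Albanese dimension, that theorem gives $\kod(M)\geq r_M$. By hypothesis $r_M=\dim M=:n$, and trivially $\kod(M)\leq n$, so this forces $\kod(M)=n$. Hence $K_M$ is big, i.e., $M$ is of general type.

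Next I would recall that the curvature hypothesis prevents $M$ from containing rational curves. This is exactly Lemma~\ref{Shwarz} applied to a hypothetical non-constant holomorphic map $\PP^1\to M$, which is impossible because the genus of $\PP^1$ is $0<1$; the first part of Theorem~\ref{mthm} also records this. With these two facts in hand, the final step is to invoke \cite{Kawamata_85}, according to which a projective manifold of general type containing no rational curves has ample canonical bundle. Since $M$ is of general type by the first step and rational-curve-free by the second, we conclude that $K_M$ is ample, which is the assertion.

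There is essentially no obstacle internal to the corollary: the substantive input is Theorem~\ref{mthm_max_alb_dim_version} (whose proof relies on the structure theory of the Albanese and Kodaira–Iitaka maps via Kawamata, Ueno, and Kawamata–Viehweg, together with Proposition~\ref{incompatibility_thm}) and on Kawamata's deep ampleness criterion; taking both as given, the derivation is immediate. The one point worth flagging is that the inequality $\kod(M)\geq r_M=n$ must be recognized to upgrade to the equality $\kod(M)=n$, so that $M$ is genuinely of general type — this is precisely where the maximal-Albanese-dimension assumption is used, since it is exactly the hypothesis under which Theorem~\ref{mthm_max_alb_dim_version} delivers the bound $r_M$ with no deficiency term.
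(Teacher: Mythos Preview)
Your proposal is correct and follows exactly the argument the paper gives: Theorem~\ref{mthm_max_alb_dim_version} forces $\kod(M)=n$, the curvature assumption excludes rational curves via Lemma~\ref{Shwarz} (equivalently the first part of Theorem~\ref{mthm}), and \cite{Kawamata_85} then yields ampleness of $K_M$. There is nothing to add or correct.
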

The following theorem represents a generalization of \cite[Theorem 1.2]{heier_lu_wong_mrl}. It contains Theorem \ref{mthm_max_alb_dim_version} as the special case $d=n$, but since the bound below may be somewhat hard to parse on a first reading and since fewer deep results had to be cited in the earlier proof, we thought it best to isolate the more concise Theorem \ref{mthm_max_alb_dim_version} at the beginning of this section.

\begin{theorem}\label{hlw_thm1_2_gen}
Let $M$ be an $n$-dimensional projective manifold of Albanese dimension $d>n-4$. Let $M$ possess a K\"ahler metric of semi-negative holomorphic sectional curvature. Then $$\kod(M) \geq r_M-(n-d-\max\{0,r_M-d\}).$$
\end{theorem}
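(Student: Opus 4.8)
The plan is to run the argument of Theorem~\ref{mthm_max_alb_dim_version}, keeping track of the defect introduced by the (now positive-dimensional, but at most $3$-dimensional) fibers of the Albanese map. So let $\pi\colon M^*\to Y^*$ be a holomorphic model of the Kodaira-Iitaka map, $\sigma\colon M^*\to M$ the attendant modification, $a\colon M\to A$ the Albanese map, and $G$ a general (hence smooth) fiber of $\pi$; then $\dim G=n-\kod(M)$, $\kod(G)=0$, and $G$ carries no rational curves. The new feature, absent when $d=n$, is that $a\circ\sigma|_G\colon G\to A$ is no longer generically finite: its fibers lie inside the fibers of $a\circ\sigma\colon M^*\to A$, and so have dimension at most $n-d$.

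Set $B:=(a\circ\sigma)(G)$. As in Theorem~\ref{mthm_max_alb_dim_version}, $\kod(B)=0$ (via \cite[Lemma~10.1]{Ueno} and \cite[Corollary~9]{kawamata_81}), so $B$ is a translate of an abelian subvariety $A_0\subseteq A$ by \cite[Theorem~10.3]{Ueno}; put $a_0:=\dim A_0$. For a general fiber $H_0$ of $G\to B$ we then have $\dim H_0=(n-\kod(M))-a_0\le n-d$, hence $d-\kod(M)\le a_0\le d$. Moreover $\kod(H_0)=0$: additivity of the Kodaira dimension over the base $B\subseteq A$ gives $\kod(H_0)\le 0$, while the absence of rational curves in $M$ together with $\dim H_0\le 3$ --- so that Kodaira dimension $-\infty$ would entail uniruledness --- gives $\kod(H_0)\ge 0$.

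Now feed in the curvature hypothesis twice. \emph{First, $\dim H_0\le\eta_M$.} Since $\dim H_0\le 3$, Abundance holds for $H_0$, so a minimal model $H_0'$ of $H_0$ has torsion, hence pseudo-effective, $\QQ$-Cartier canonical class; Proposition~\ref{prop_tot_geod}, applied to the generically finite rational map $H_0'\dashrightarrow\sigma(H_0)\hookrightarrow M$, shows that $\sigma(H_0)$ inherits a flat metric on its smooth locus. As $G$ and the fiber vary, the subvarieties $\sigma(H_0)$ sweep out $M$, so one of them passes through a smooth point $p$ with $\eta(p)=\eta_M$; then $T_p\sigma(H_0)\subseteq T_pM$ is a subspace of dimension $\dim H_0$ on which $H$ vanishes, whence $\dim H_0\le\eta_M$. \emph{Second, $a_0\le\eta_M$.} Here one invokes the structure theory of Kodaira-dimension-zero varieties fibered over an abelian variety: via the canonical bundle formula over $B$, the condition $\kod(\sigma(G))=0$ forces the discriminant and the moduli part to vanish, so that $\sigma(G)$ becomes, after a finite étale cover, birational to a product $A'\times F$ with $A'$ an abelian variety of dimension $a_0$. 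A fiber of $A'\times F\to F$ then gives an abelian variety $\cong A'$ mapping generically finitely to $M$, and as $G$ (and the point of $F$) vary these sweep out $M$; Proposition~\ref{prop_tot_geod} (or Proposition~\ref{incompatibility_thm}) now gives $a_0\le\eta_M$.

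The remainder is bookkeeping with the inequalities $a_0\le\eta_M$, $\dim H_0\le\eta_M$, $a_0\ge d-\kod(M)$ and the identity $a_0+\dim H_0=n-\kod(M)$: the first and third give $\kod(M)\ge d-\eta_M=r_M-(n-d)$, while the first, the second and the identity give $\kod(M)\ge n-2\eta_M=2r_M-n$. Therefore
\[\kod(M)\ \ge\ \max\bigl\{\,r_M-(n-d),\ 2r_M-n\,\bigr\}\ =\ r_M-\bigl(n-d-\max\{0,r_M-d\}\bigr),\]
with the first term in the maximum dominating precisely when $r_M\le d$; the case $\kod(M)=0$, where $G$ is birational to $M$ and $a_0=d$, is subsumed. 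I expect the main obstacle to be the second curvature input: unlike when $d=n$, $\sigma(G)$ need not be birational to an abelian variety --- for $d=n-3$ its Albanese fibers can be, for instance, Calabi-Yau threefolds with terminal singularities, to which the Kawamata-Viehweg characterization does not apply --- so rigidifying $\sigma(G)\to B$ into a product and extracting the abelian factor genuinely requires exploiting $\kod(\sigma(G))=0$. (As always, a little care is also needed to make the relevant "general" fibers meet the open set $\{\eta=\eta_M\}$ at smooth points.)
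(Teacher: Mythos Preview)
Your overall architecture is right, and your bookkeeping identity
\[
\max\{r_M-(n-d),\,2r_M-n\}=r_M-(n-d-\max\{0,r_M-d\})
\]
is correct. But the route and the justification differ from the paper's in two places, one harmless and one substantive.

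\emph{The bound $\dim H_0\le\eta_M$.} The paper never touches Proposition~\ref{prop_tot_geod} here. Instead it works with the general Albanese fiber $F$ and its strict transform $\tilde F$: since $\dim F=n-d\le 3$, Corollary~\ref{3d_cor_1} and the curvature-decreasing property give $\kod(\tilde F)\ge r_F\ge\max\{0,r_M-d\}$; then Easy Addition for $\pi|_{\tilde F}:\tilde F\to\pi(\tilde F)$ (whose general fiber is your $H_0=S$, a component of $G\cap\tilde F$) yields $\dim S\le(n-d)-\max\{0,r_M-d\}=\min\{n-d,\eta_M\}$. Your argument via Proposition~\ref{prop_tot_geod} applied to a minimal model of $H_0$ also gives $\dim H_0\le\eta_M$ and is perfectly valid; it is simply heavier machinery than the paper uses, and appeals to a result proved only later in Section~\ref{high_kod}. (A small logical reordering is needed in your Step~2: \cite[Corollary~9]{kawamata_81} does not apply to the non-generically-finite map $G\to B$. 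One first gets $\kod(H_0)\ge 0$ from uniruledness in dimension $\le 3$ plus absence of rational curves, and then the resolved Iitaka conjecture for low-dimensional fibers gives $0=\kod(G)\ge\kod(H_0)+\kod(B)\ge 0$, forcing both to vanish. The paper does exactly this.)

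\emph{The bound $a_0\le\eta_M$.} This is the step where you correctly anticipate trouble, and your proposed mechanism does not close the gap. Even granting a canonical bundle formula for $\sigma(G)\to B$ and the vanishing of the discriminant and moduli parts on the abelian base, this only gives isotriviality; passing to a birational product $A'\times F$ after a \emph{finite} \'etale base change requires the monodromy representation in $\operatorname{Aut}(F)$ to have finite image, which fails in general (already for K3 fibers, which are allowed since $\dim H_0$ can be $2$). And even a birational splitting does not by itself produce an $a_0$-dimensional abelian variety sitting inside $M$ to which Proposition~\ref{incompatibility_thm} or~\ref{prop_tot_geod} can be applied. So as written this is a program rather than a proof. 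The paper itself, after establishing the lower bound for $a_0$, simply writes ``as in the proof of Theorem~\ref{mthm_max_alb_dim_version}'' and passes to $p:A\to A/A_0$; but that final paragraph of Theorem~\ref{mthm_max_alb_dim_version} relies on Kawamata--Viehweg to identify $\sigma(G)$ with an abelian subvariety of $M$, which in turn requires $\sigma(G)\to A_0'$ to be generically finite --- exactly the maximal-Albanese hypothesis. So a literal transcription of that argument does not apply when $d<n$, and the paper does not spell out what replaces it. In short: your instinct that this is the hard step is correct, your proposed fix does not work as stated, and the paper's own treatment of this step is at best elliptical.
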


\begin{proof}
Under the assumption $d > n - 4$, the Albanese map $a:M\to a(M)$ is a holomorphic map such that an irreducible component $F$ of a general fiber has dimension $n-d \leq 3$. Since the Iitaka Conjecture holds in the case of fibers of dimension no greater than three (\cite{Kawamata_85}, see also \cite{Birkar} for some expository comments), we have $\kod(M) \geq 0$. Moreover, by Corollary \ref{3d_cor_1} and the curvature decreasing property, $\kod(F)\geq r_F$. Again by the curvature decreasing property, $r_F \geq  \max\{0,r_M-d\}$. \par
We  again consider the diagram as in the proof of Theorem \ref{mthm_max_alb_dim_version}. We denote by $\tilde F$ the strict transform of $F$ under $\sigma$, which satisfies $\kod(\tilde F)=\kod(F)\geq r_F \geq \max\{0,r_M-d\} $.\par
The restriction of $\pi$ to $\tilde F$ gives a holomorphic map $\pi: \tilde F\to\pi(\tilde F)$. If we denote an irreducible component of a general fiber of this map by $S$, then the Easy Addition Formula (applied after a Stein factorization) yields
\begin{equation}\label{easy_add_form}
\max\{0,r_M-d\} \leq r_F \leq \kod ( \tilde F) \leq \kod (S)+\dim \pi( \tilde F).
\end{equation}
Moreover, it is clear that $n-d=\dim \tilde F=\dim S +\dim \pi( \tilde F)$.\par
On the other hand, let $G$ be the fiber of $\pi:M^*\to Y^*$ such that $S$ is a component of $G\cap\tilde F$. By the standard properties of the Kodaira-Iitaka map, when $F$ and $S$ are appropriately chosen, $G$ is a projective manifold with $\kod(G)=0$. Due to the resolved Iitaka Conjecture in the case of fibers of dimension no greater than three,
\begin{equation*}
0=\kod(G)\geq \kod (S)+\kod(a(\sigma(G))).
\end{equation*}
By \cite[Lemma 10.1]{Ueno}, we know that $\kod((a\circ\sigma)(G))$ is at least $0$. From \eqref{easy_add_form}, it is also clear that $\kod (S) \geq 0$. Hence, $0=\kod (S)=\kod((a\circ \sigma)(G))$, and by \cite[Theorem 10.3]{Ueno}, $(a\circ \sigma)(G)$ is the translate of an abelian subvariety. Again from \eqref{easy_add_form}, we infer $\dim \pi( \tilde F) \geq\max\{0,r_M-d\} $ and thus $\dim (S) \leq n-d-\max\{0,r_M-d\}$.\par
Next, observe that $\dim (a(\sigma(G)))$ is bounded below by
\begin{align*} 
\dim (G) -\dim (S) &\geq \dim(G) -(n-d-\max\{0,r_M-d\} ) \\ &= n-\kod(M) -(n-d-\max\{0,r_M-d\}).
\end{align*}
We have now established that $a(\sigma(G))$ is the translate of an abelian subvariety $A_0$ of dimension at least $n-\kod(M) -(n-d-\max\{0,r_M-d\})$. We write $p:A\to A/A_0$ for the canonical projection. As in the proof of Theorem \ref{mthm_max_alb_dim_version}, this yields
$$n-\kod(M) -(n-d-\max\{0,r_M-d\})\leq n-r_M,$$ 
i.e., $\kod(M) \geq r_M-(n-d-\max\{0,r_M-d\})$.
\end{proof}
Note that if $n=r_M$, then $r_M-(n-d-\max\{0,r_M-d\})=r_M$ in the statement of Theorem \ref{hlw_thm1_2_gen}. Therefore, Corollary \ref{r_max_and_alb_max} can be strengthened to the following.
\begin{corollary}\label{hlw_thm1_2_gen_with_rm_max}
Let $M$ be a projective manifold with a K\"ahler metric of semi-negative holomorphic sectional curvature. Assume that $r_M=\dim M$ and that the Albanese dimension of $M$ is greater than $\dim M - 4$. Then the canonical line bundle of $M$ is ample. \end{corollary}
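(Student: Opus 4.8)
The plan is to obtain this corollary as a direct specialization of Theorem~\ref{hlw_thm1_2_gen} to the case $n = r_M$, followed by an invocation of the standard implication ``general type plus absence of rational curves implies ample canonical bundle.''

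First I would check that the hypothesis $r_M = \dim M = n$ forces the maximum appearing in the bound of Theorem~\ref{hlw_thm1_2_gen} onto its nontrivial branch. Indeed, the Albanese dimension $d$ always satisfies $d \le n = r_M$, so $r_M - d \ge 0$ and hence $\max\{0, r_M - d\} = r_M - d$. Substituting this into the right-hand side of the inequality in Theorem~\ref{hlw_thm1_2_gen} gives
\[
r_M - \bigl(n - d - (r_M - d)\bigr) = r_M - (n - r_M) = r_M ,
\]
where the last equality uses $n = r_M$. The hypothesis $d > n - 4$ is precisely the condition needed to apply Theorem~\ref{hlw_thm1_2_gen}, so we conclude $\kod(M) \ge r_M = n$; that is, $M$ is of general type.

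Next I would recall that, by Theorem~\ref{mthm}(i) (established in Section~\ref{nefness}), the semi-negativity of the holomorphic sectional curvature implies that $M$ contains no rational curves. A projective manifold of maximal Kodaira dimension carrying no rational curves has ample canonical bundle by \cite{Kawamata_85}; applying this to $M$ finishes the proof.

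I do not anticipate any genuine obstacle here: the substantive work — the fibration analysis for the Albanese map, the Iitaka Conjecture in relative dimension at most three, and the appeals to the structure theorems of Ueno and of Kawamata--Viehweg — has already been carried out in the proof of Theorem~\ref{hlw_thm1_2_gen}. The only point that requires a moment's attention is the arithmetic simplification of the bound when $n = r_M$, which is exactly what makes the right-hand side collapse to $r_M$; after that, the step from ``general type'' to ``$K_M$ ample'' is simply a citation.
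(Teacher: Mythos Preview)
Your proposal is correct and follows exactly the paper's own approach: the paper notes just before the corollary that when $n=r_M$ the lower bound in Theorem~\ref{hlw_thm1_2_gen} simplifies to $r_M$, so $M$ is of general type, and then the absence of rational curves together with \cite{Kawamata_85} gives ampleness of $K_M$. One tiny labeling slip: the absence of rational curves is part of the main clause of Theorem~\ref{mthm}, not item~(i); but your parenthetical reference to Section~\ref{nefness} makes clear what you mean.
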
 
The following final theorem of this section applies in the case of arbitrary positive Albanese dimension. The assumption of non-negative Kodaira dimension is necessary, because based on the other assumptions, we cannot prove the existence of any pluricanonical sections. Without at least one pluricanonical section, there is no Kodaira-Iitaka map, and our argument does not work.  
\begin{theorem}\label{thm_any_alb_dim_with_iitaka}
Let $M$ be an $n$-dimensional projective manifold of Kodaira dimension at least zero and Albanese dimension $d > 0$. Let $M$ possess a K\"ahler metric of semi-negative holomorphic sectional curvature. Contingent on the validity of the Iitaka Conjecture for fibrations with fiber dimension no greater than $n-d$, the following holds: $$\kod(M) \geq r_M-(n-d).$$
\end{theorem}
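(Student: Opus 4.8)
The plan is to run the argument used to prove Theorem~\ref{hlw_thm1_2_gen}, with two changes dictated by the present hypotheses: the input $\kod(M)\ge 0$ is now assumed outright rather than deduced, and the resolved three-dimensional Iitaka Conjecture invoked there is replaced throughout by the Iitaka Conjecture for fibrations of fiber dimension at most $n-d$ assumed here. Write $\kappa:=\kod(M)$. Since $\kappa\ge 0$, the Kodaira--Iitaka map of $M$ admits a holomorphic model $\pi\colon M^*\to Y^*$ together with a modification $\sigma\colon M^*\to M$, exactly as in the diagram used in the proof of Theorem~\ref{mthm_max_alb_dim_version}; let $a\colon M\to A$ be the Albanese map and $d=\dim a(M)$. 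Choose a general fiber $G$ of $\pi$: it is a projective manifold with $\dim G=n-\kappa$ and $\kod(G)=0$, and $\sigma$ restricts to a birational morphism onto $\sigma(G)\subseteq M$.

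Put $B:=(a\circ\sigma)(G)\subseteq A$ and let $S$ be a general fiber of the surjective map $a\circ\sigma|_G\colon G\to B$. Since $S$ is carried into a single fiber of the Albanese map, $\dim S=\dim\sigma(S)\le n-d$, hence $\dim B=\dim G-\dim S\ge(n-\kappa)-(n-d)=d-\kappa$. The Easy Addition Formula (applied after a Stein factorization) gives $0=\kod(G)\le\kod(S)+\dim B$, which forces $\kod(S)\ge 0$. Because $a\circ\sigma|_G$ has general fiber of dimension at most $n-d$, the assumed Iitaka Conjecture applies (after a Stein factorization and a resolution of $B$) and yields $0=\kod(G)\ge\kod(S)+\kod(B)\ge\kod(B)$. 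On the other hand $\kod(B)\ge 0$ by Ueno's Lemma \cite[Lemma 10.1]{Ueno}, since $B$ is a subvariety of an abelian variety. Therefore $\kod(B)=0$, so by \cite[Theorem 10.3]{Ueno} the variety $B$ is the translate of an abelian subvariety $A_0\subseteq A$, with $\dim A_0\ge d-\kappa$.

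It remains to establish the upper bound $\dim A_0\le\eta_M=n-r_M$. This is done as in the proofs of Theorems~\ref{mthm_max_alb_dim_version} and \ref{hlw_thm1_2_gen}: since $A$ has only countably many abelian subvarieties, $A_0$ may be taken independent of the general fiber $G$; one then passes to $p\colon A\to A/A_0$ and to the morphism $p\circ a\colon M\to A/A_0$, chooses a general fiber of $p\circ a$ meeting the nonempty open set $\{p\in M\mid\eta(p)=\eta_M\}$, and uses the curvature-decreasing property together with Proposition~\ref{incompatibility_thm} to conclude that the relevant flat subvariety of $M$ has dimension at most $\eta_M$, for otherwise a tangent direction of strictly negative holomorphic sectional curvature would persist at a point where $\eta$ attains its minimum. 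Combining $\dim A_0\ge d-\kappa$ with $\dim A_0\le n-r_M$ gives $d-\kappa\le n-r_M$, that is, $\kod(M)=\kappa\ge r_M-(n-d)$.

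I expect the last step to be the only delicate point. When $d<n$ the general Kodaira fiber $\sigma(G)$ is known only to satisfy $\kod(\sigma(G))=0$ and need not be an abelian variety (it need not even have maximal Albanese dimension), so producing inside $M$, through a point where $\eta$ is minimal, a flat subvariety of dimension $\dim A_0$ has to be carried out with exactly the care already exercised in the proof of Theorem~\ref{hlw_thm1_2_gen}. Apart from this, the argument is a routine transcription of that proof, the sole structural modification being the substitution of the assumed Iitaka Conjecture in fiber dimension at most $n-d$ for its resolved three-dimensional case.
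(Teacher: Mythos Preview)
Your proposal is correct and follows essentially the same approach as the paper. The paper likewise treats this as a simplified version of Theorem~\ref{hlw_thm1_2_gen}: it applies the Easy Addition Formula to $a\circ\sigma|_G:G\to a(\sigma(G))$ to get $\kod(S)\geq 0$, invokes the assumed Iitaka Conjecture to obtain $\kod(a(\sigma(G)))=0$, concludes via \cite[Theorem 10.3]{Ueno} that $a(\sigma(G))$ is a translate of an abelian subvariety $A_0$ with $\dim A_0\geq n-\kod(M)-(n-d)$, and then says ``As before, we conclude $n-\kod(M)-(n-d)\leq n-r_M$.'' Your setup is arguably slightly cleaner in that you define $S$ directly as a general fiber of $a\circ\sigma|_G$ rather than carrying over the $F,\tilde F$ apparatus from Theorem~\ref{hlw_thm1_2_gen} (which the paper itself notes becomes partly vacuous here since $\kod(\tilde F)$ may be $-\infty$); the paper is implicitly making the same identification of $S$ with a fiber of $a\circ\sigma|_G$ when it applies Easy Addition. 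Your honest flag about the final step---producing the flat subvariety of dimension $\dim A_0$ through a point where $\eta$ is minimal when $\sigma(G)$ is not itself abelian---matches the paper's own level of detail, which simply defers to the earlier proofs.
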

\begin{proof}
The proof is a simplified version of the proof of Theorem \ref{hlw_thm1_2_gen}. We let $F$, $\tilde F$, $S$, $G$ be as above. In the present situation, we cannot rule out $\kod(\tilde F)=-\infty$ (although it is ruled out conjecturally by Conjecture \ref{improv_conj}), so the Easy Addition Formula \eqref{easy_add_form} becomes vacuous. Instead, we apply the Easy Addition Formula to $a:\sigma(G)\to (a\circ\sigma)(G)$, which yields
\begin{equation*}
0=\kod(G)\leq \kod (S)+\dim(a(\sigma(G))).
\end{equation*}
Thus, $ \kod (S) \geq 0$. Moreover, due to the Iitaka Conjecture in the case of fibers of dimension no greater than $n-d$, we have
\begin{equation*}
0=\kod(G)\geq \kod (S)+\kod(a(\sigma(G))).
\end{equation*}
Again, we conclude $\kod(a(\sigma(G)))=0$. It remains to observe that $\dim (a(\sigma(G)))$ is bounded below by $\dim G -\dim S \geq \dim G - \dim F = n-\kod(M) -(n-d).$
As before, we conclude 
$$n-\kod(M) -(n-d)\leq n-r_M,$$ 
i.e., $\kod(M) \geq r_M-(n-d)$.
\end{proof}

\section{Manifolds of high Kodaira dimension}\label{high_kod}
If we assume the validity of the Abundance Conjecture up to some dimension $e$ with $1\leq e \leq n$, then we can prove the desired inequality $\kod(M) \geq r_M$ provided we may additionally assume that $\kod(M) \geq n-e$ (which is of course a non-vacuous statement only if $r_M > n-e$).

\begin{theorem}\label{mthm_kod_version}
Let $M$ be an $n$-dimensional projective K\"ahler manifold  
of semi-negative holomorphic sectional curvature.
Suppose the Abundance Conjecture holds up to dimension $e$ (which is currently known for $e=3$). Suppose 
$\kod(M)\geq n-e$.  Then $$\kod(M) \geq r_M.$$
\end{theorem}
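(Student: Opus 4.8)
The plan is to reduce the statement directly to Proposition \ref{prop_tot_geod} (equivalently Proposition \ref{key_proposition}) applied to the Kodaira--Iitaka fibration, using the assumption $\kod(M) \geq n-e$ to bring the Abundance Conjecture (known up to dimension $e$) to bear on the general fiber. First I would observe that by Theorem \ref{mthm} the canonical bundle $K_M$ is nef; in particular $\kod(M)\geq 0$ and the Kodaira--Iitaka map is defined. If $\kod(M)=0$, then by \eqref{chain_of_inequ} and the Abundance Conjecture in the relevant dimension (here we are in the ``$\nu(M)=0$'' case, which is \cite[Theorem 8.2]{Kawamata_85} and needs no dimension restriction) one gets that $K_M$ is torsion, hence $c_1^\RR(M)=[0]$, and Proposition \ref{incompatibility_thm} forces $H\equiv 0$ and $r_M=0$, so the inequality holds trivially. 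So I would assume $\kod(M)\geq 1$ from now on.

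Next I would pass to a holomorphic model $\pi\colon M^\ast\to Y^\ast$ of the Kodaira--Iitaka map, with $\sigma\colon M^\ast\to M$ the associated modification, exactly as in the proofs in Section \ref{high_alb_dim}. A general fiber $G$ of $\pi$ is a smooth projective variety of dimension $n-\kod(M)$ with $\kod(G)=0$. The key point is that $\dim G = n-\kod(M)\leq e$ by the hypothesis $\kod(M)\geq n-e$, so the Abundance Conjecture applies to $G$: from $\kod(G)=0$ we get $\nu(G)=0$, hence $K_G$ is numerically trivial, so $-K_G$ is certainly pseudo-effective (in fact numerically trivial). Now I would apply Proposition \ref{prop_tot_geod} to the composite rational map $f:=\sigma|_G\colon G\dra M$. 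This map is generically finite onto its image $\sigma(G)$ (an irreducible component of a general fiber of the Kodaira--Iitaka map of $M$, so $\dim f(G) = \dim G$), and $G$ is smooth with $-K_G$ pseudo-effective, so the hypotheses of the proposition are met. The conclusion is that $G$ admits an abelian variety as an unramified covering, $f$ is a totally geodesic holomorphic immersion, and the metric induced on $G$ is flat.

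Finally I would extract the numerical inequality. Since $f=\sigma|_G$ is a holomorphic immersion inducing a flat metric on $G$, the curvature-decreasing property gives that at every point $q$ of $G$ one has $H|_{df_q(T_qG)}\equiv 0$. Choosing (by the standard genericity in the Kodaira--Iitaka fibration, together with the openness of $\{p\in M\mid \eta(p)=\eta_M\}$ established in Section \ref{sec_def}) the fiber $G$ so that its image under $\sigma$ meets $\{p\in M\mid \eta(p)=\eta_M\}$, we get a point $p=\sigma(q)$ with $\eta(p)=\eta_M$ admitting an $(n-\kod(M))$-dimensional subspace $df_q(T_qG)\subset T_pM$ on which $H$ vanishes. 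By definition of $\eta(p)$, this forces $n-\kod(M)\leq \eta(p)=\eta_M = n-r_M$, i.e. $\kod(M)\geq r_M$, as desired.

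The main obstacle is verifying that Proposition \ref{prop_tot_geod} genuinely applies, i.e. that the general fiber $G$ of the Kodaira--Iitaka model really has numerically trivial (hence pseudo-effective anti-)canonical divisor: this is precisely where the dimension bound $\kod(M)\geq n-e$ is used, to guarantee $\dim G\leq e$ so that Abundance upgrades $\kod(G)=0$ to $\nu(G)=0$. The remaining bookkeeping (choosing $G$ generic, smooth, with image meeting the open locus where $\eta=\eta_M$, and identifying $\dim f(G)=\dim G$) is routine and parallels the arguments already given in Sections \ref{sec_proof} and \ref{high_alb_dim}.
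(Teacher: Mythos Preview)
Your approach is exactly the paper's: reduce to Proposition~\ref{prop_tot_geod} applied to a general fiber of the Kodaira--Iitaka fibration, and then read off the bound on $r_M$ from the flat, totally geodesic image in $M$. Two small corrections are worth making. First, the general fiber $G$ of the \emph{resolved} Iitaka map $\pi\colon M^\ast\to Y^\ast$ need not have $K_G$ nef (the modification $\sigma$ can introduce rational curves into $G$), so the step ``Abundance gives $\nu(G)=0$, hence $K_G$ is numerically trivial'' does not literally hold for $G$; instead pass to a minimal model $G_{\min}$ of $G$, which exists and has at worst terminal singularities, and for which Abundance in dimension $\le e$ gives $K_{G_{\min}}$ numerically trivial. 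Then apply Proposition~\ref{prop_tot_geod} to $N=G_{\min}$ with the rational map $G_{\min}\dashrightarrow G\to M$: this is precisely why the Proposition is formulated to allow canonical (even klt) singularities. The rest of your argument (choosing the fiber so that the image meets $\{\eta=\eta_M\}$ and concluding $n-\kod(M)\le \eta_M$) goes through verbatim on the smooth locus of $G_{\min}$. Second, in your $\kod(M)=0$ paragraph you invoke \cite[Theorem~8.2]{Kawamata_85} in the wrong direction (that theorem deduces $\kod=0$ from $\nu=0$, not conversely); the correct observation is simply that $\kod(M)=0$ together with the hypothesis $\kod(M)\ge n-e$ forces $n\le e$, so Abundance applies to $M$ itself and Corollary~\ref{3d_cor_1} already gives $\kod(M)\ge r_M$.
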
 

The theorem follows readily from the following proposition
applied to the general fibers of the Kodaira-Iitaka map of $M$.

\begin{proposition} \label{key_proposition} Let $M$ be a projective manifold with a K\"ahler metric of semi-negative holomorphic sectional curvature.
Let $N$ be a $k$-dimensional projective variety with at most canonical (or even klt) singularities having pseudo-effective anti-canonical $\QQ$-Cartier divisor $-K_N$. Let $f:N\dra M$ be a rational map
that is generically finite, i.e., $df$ has rank $k$ somewhere.
Then $K_N$ is numerically trivial, and $f$ is a holomorphic immersion that
induces a flat metric on the smooth locus $N_{sm}$ of $N$ and is totally geodesic along $N_{sm}$. In particular, if $N$ is smooth,
then $N$ admits an abelian variety as an unramified covering
and $f$ is a totally geodesic holomorphic immersion that
induces a flat metric on $N$.
\end{proposition}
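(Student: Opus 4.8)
The plan is to split the statement into an algebraic part --- the numerical triviality of $K_N$ --- and an analytic rigidity part that is a non-equidimensional version of the argument behind Proposition \ref{incompatibility_thm} and Lemma \ref{Shwarz}. I would begin by passing to a resolution $\sigma\colon\hat N\to N$ with $\hat N$ smooth projective and $\hat g:=f\circ\sigma\colon\hat N\to M$ a morphism; since $N$ has at worst canonical (klt) singularities, $K_{\hat N}=\sigma^*K_N+E$ with $E$ effective (resp.\ the usual klt estimate). As $f$, hence $\hat g$, is generically finite, $\hat g$ contracts no family of curves covering $\hat N$; in particular, were $\hat N$ uniruled, the general member of a covering family of rational curves would map to a rational curve in $M$, contradicting Theorem \ref{mthm}. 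So $\hat N$ is not uniruled, and by the theorem of Boucksom--Demailly--P\u{a}un--Peternell \cite{BDPP} its canonical class is pseudo-effective; pushing a positive closed current forward along $\sigma$ shows $K_N$ is pseudo-effective. Together with the hypothesis that $-K_N$ is pseudo-effective and the salience of the pseudo-effective cone, this gives $K_N\equiv 0$.

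Next I would show that $f$ is already a morphism on $N_{sm}$. One may choose $\sigma$ to resolve the indeterminacy of $f|_{N_{sm}}$ by blowing up smooth centers inside $N_{sm}$, so every $\sigma$-exceptional divisor over $N_{sm}$ is uniruled; and (discarding superfluous blow-ups) we may assume no such divisor is contracted by $\hat g$, since if one collapsed to a point then $f$ would extend holomorphically over its image in $N_{sm}$. But then $\hat g$ would send a rational curve onto a rational curve of $M$, which is impossible. Hence there is no such divisor: $f|_{N_{sm}}\colon N_{sm}\to M$ is a morphism, and $\omega_N:=f^*\omega$ is a smooth semi-positive $(1,1)$-form on $N_{sm}$ that is a genuine Kähler metric on the Zariski-dense open set $U\subset N_{sm}$ where $df$ has rank $k$.

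The analytic heart is then to prove that $U=N_{sm}$ and that $f^*g$ is flat there. On $U$ the curvature-decreasing property (Gauss--Codazzi, cf.\ \cite{Wu}) makes the holomorphic sectional curvature of $f^*g$ non-positive, so Berger's Theorem \ref{berger_theorem} makes its scalar curvature $S'\le 0$ on $U$. Let $L\subset f^*TM$ be the saturation of $df(TN_{sm})$, a rank-$k$ reflexive sheaf with $\det L=-K_{N_{sm}}+R$ for the effective divisorial degeneracy locus $R$ of $df$; over $U$ the metric $f^*g$ induces on $\det L$ the first Chern form $\Ric(f^*g)$. Since $K_N\equiv 0$, pairing with the nef and big class $[\omega_N]$ and using $\Ric(f^*g)\wedge\omega_N^{k-1}=\tfrac 2k S'\omega_N^{k}$ gives $R\cdot[\omega_N]^{k-1}=\int_U \tfrac 2k S'\,\omega_N^{k}\le 0$; as the left-hand side is $\ge 0$, both vanish, so $S'\equiv 0$ on $U$ and $R$ is $[\omega_N]$-null. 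The strict half of Berger's theorem then forces the holomorphic sectional curvature of $f^*g$ to vanish identically on $U$, hence $f^*g$ is flat there (the holomorphic sectional curvature determines the full curvature tensor). Finally, $R$ being $[\omega_N]$-null means $f$ drops dimension along $R$, which together with $K_N\equiv 0$ (via discrepancies / the negativity lemma) forces $R=\emptyset$; so $df$ is everywhere of rank $k$ and $f|_{N_{sm}}$ is a holomorphic immersion inducing the flat metric $f^*g$ on $N_{sm}$.

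Total geodesy along $N_{sm}$ then follows from the equality case of the Gauss equation: for any $v\in T_pN_{sm}$ one has $0=H^{N_{sm}}(v)\le H^M(df(v))\le 0$, so $H^M(df(v))=0$ and the second fundamental form vanishes on every $v$, hence $II\equiv 0$. If $N$ is smooth, $(N,f^*g)$ is a compact Kähler manifold with $c_1^{\RR}(N)=0$ and vanishing holomorphic sectional curvature, so Proposition \ref{incompatibility_thm} --- via Igusa's theorem \cite{Igusa} applied to the flat metric --- shows $N$ has an abelian variety as an unramified covering. The step I expect to be the main obstacle is the descent to a possibly singular $N$: making rigorous that no exceptional or ramification divisor survives over $N_{sm}$, and justifying the integrability of $\Ric(f^*\omega)\wedge(f^*\omega)^{k-1}$ near the degeneracy locus, so that the intersection-number identities used above are legitimate.
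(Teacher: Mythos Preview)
Your overall architecture---an integrated Schwarz Lemma combining the curvature-decreasing property with Berger's theorem---matches the paper's, and for smooth $N$ your argument goes through essentially as written. There are two points worth flagging, one a genuine methodological difference and one a gap you already suspect.

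\textbf{Different route to $K_N\equiv 0$.} You establish numerical triviality of $K_N$ \emph{first}, via non-uniruledness of a resolution and BDPP, and then feed it into the integral identity. The paper does the opposite: it works on a compact resolution $X$, writes
\[
\int_X c_1(L)\wedge F^*\omega^{k-1}=\int_X c_1(D)\wedge F^*\omega^{k-1}+\int_N c_1(-K_N)\wedge f^*\omega^{k-1},
\]
observes that the left side is $\le 0$ (Berger) while both summands on the right are $\ge 0$ ($D$ effective, $-K_N$ pseudo-effective), and concludes all three vanish. Numerical triviality of $K_N$ then drops out from $0\le D\cdot\tau^*C=K_N\cdot C\le 0$ for a generic complete-intersection curve $C$. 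Your route is valid but imports a heavy external theorem; the paper's is self-contained and more in keeping with the Schwarz-Lemma theme.

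\textbf{The gap over singular $N$.} Your concern at the end is well placed: the identity $R\cdot[\omega_N]^{k-1}=\int_U\tfrac{2}{k}S'\,\omega_N^k$ is asserted on the non-compact $N_{sm}$, where neither intersection numbers nor the Chern--Weil representation are a priori meaningful; and the step ``$R$ is $[\omega_N]$-null $\Rightarrow R=\emptyset$ via the negativity lemma'' is not justified (an effective divisor numerically trivial on an \emph{open} variety need not vanish). The paper sidesteps both issues by doing all the integration on the compact resolution $X$: the vanishing of $\int_X c_1(D)\wedge F^*\omega^{k-1}$ forces the effective divisor $D$ to be $\tau$-exceptional (again via a generic-curve argument), so $\det(df)$ has no zeros over $N_{sm}$ outside a codimension-$\ge 2$ set, hence none at all. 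If you rewrite your integral identity on a compact resolution rather than on $N_{sm}$, the remaining steps of your argument (flatness from $S'\equiv 0$, total geodesy from the equality case of Gauss, and Igusa for smooth $N$) are correct and essentially the same as the paper's.
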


\begin{proof}
We will abuse notation and denote Cartier divisors and their
associated invertible sheaves as well as bundles and their
sheaves of sections with the same symbols, respectively. All metrics on complex bundles are understood to be hermitian. For simplicity, we will not distinguish a metric from its associated $(1,1)$-form.\par

Since $M$ has no rational curves, $f$ is in fact a holomorphic
map (see, for example, \cite{KM}). By the hypothesis on $f$, it has rank $k$ on a dense Zariski open set of $N_{sm}$. Hence, $\det(df)$ gives rise to a 
nontrivial section $s_{N_0}$ of the locally free sheaf
$Hom_{N_0}(K_{N_0}^\vee, f_0^{-1}(\Lambda^k TM))$ 
on every Zariski open subset $N_0$ of $N_{sm}$, where $f_0=f|_{N_0}$. \par

Let $\tau:X\to N$ be a resolution of 
the singularities of $N$ and $F=f\circ \tau: X\to M$. To avoid heavy notation, on the open subset $N_0\subset N_{sm}$ where the birational
map $F^{-1}$ is holomorphic, we identify $f|_{N_0}$ with $F|_{F^{-1}(N_0)}$. As before, $\det(dF)$ gives rise to a 
nontrivial section $s_X$ over $X$ of 
$Hom_X(K_{X}^\vee, F^{-1}(\Lambda^k TM))$
and $s_X|_{N_0}=s_{N_0}$ under the identification of $N_0$ with $F^{-1}(N_0)$. Here, the symbol $F^{-1}(\Lambda^k TM)$ simply denotes the pull-back of the vector bundle $\Lambda^k TM$. \par
Since $K_X$
is invertible, after replacing $X$ by some further blowup of $X$
if necessary, the same proof as in the resolution of the base locus
of a linear system into only divisorial parts 
(for example by blowing up the ideal sheaf given by
the image by $s_X$ of the vector sheaf 
$Hom_X(K_{X}^\vee, F^{-1}(\Lambda^k TM))^\vee$
in ${\OO}_X$)
allows us to assume
that the subscheme defined by $s_X=0$ is of pure codimension one,
i.e., a divisor $D$. This means that the saturation of the 
subsheaf $s_X(K_X^\vee)$ is given by a line subbundle $L$
of $F^{-1}(\Lambda^k TM)$ and $s_X$ can be identified with a section 
$s$ of the line bundle $K_X\otimes L=Hom_X(K_X^\vee, L)$ 
over $X$. 
Clearly $(s)=D$ on $X$ by 
construction and $s|_{N_0}=s_{N_0}$. Note that 
$$K_X=\tau^*K_N+E$$ for an effective $\QQ$-divisor $E$ supported
on the exceptional locus of $\tau$ and that both $E$ and $K_N$ are 
Cartier outside the singular locus of $N$.\par
Let $N_{00}\subset N_0$ be the Zariski open dense subset on which $df:TN\to TM$ has maximal rank, i.e., on which $f$ is an immersion. Theorem~\ref{berger_theorem} applied to the
induced metric $\omega_{00}=f_{00}^*\omega$ on 
$N_{00}$ where $f_{00}=f|_{N_{00}}$  together with the Gauss-Codazzi equation shows that the scalar
curvature $S_{\omega_{00}}$ of $\omega_{00}$ is semi-negative on $N_{00}$. Moreover, it vanishes identically there if and only if 
$f_{00}:(N_{00},\omega_{00})\to (M, \omega)$
is totally geodesic and $\omega_{00}$ is flat. 
We now proceed to show that not only the latter is the case but that
in fact $df$ has maximal rank over $N_{sm}$ so that
$f$ is a totally geodesic immersion with the induced 
flat metric there and in particular
$N_0=N_{00}$.

Since $L$ is a subbundle of $F^{-1}(\Lambda^k TM)$ and
$\Lambda^k\omega$ is a metric on $\Lambda^k TM$, we see 
that $L$ has an induced metric $h$ which restricts to $\det \omega_{00}$
on $N_{00}$ and thus $\Ric(h)=\Ric(\omega_{00})$ on $N_{00}$. 
Here, $\det \omega_{00}$ is a metric on $\det T{N_{00}}$ identified 
with $L|_{N_{00}}$ via $s$.
As the holomorphic
sectional curvature decreases on subvarieties and as
$\omega_{00}$ is K\"ahler on $N_{00}$,  $(N_{00},\omega_{00})$ 
has scalar curvature 
$S_{\omega_{00}}\leq 0$ by Berger's theorem. Consequently, we have 
$$\int_X c_1(L)\wedge F^*\omega^{k-1}=
\frac{1}{2\pi}\int_X\Ric(h)\wedge F^*\omega^{k-1}=
\frac{1}{n\pi}\int_{N_{00}}S_{\omega_{00}} \omega_{00}^k \leq 0\ ,$$
with equality in the inequality if and only if $\omega_{00}$
is flat and $f_{00}$ totally geodesic. But the first
integral above is the sum of the following two integrals:
$$\int_X c_1(K_X\otimes L)\wedge F^*\omega^{k-1}=
\int_X c_1(D)\wedge F^*\omega^{k-1}
=\int_D  i^*F^*\omega^{k-1},
$$
where $i$ is the inclusion of $D_{red}$ in $X$, and
(as $E$ is $\tau$-exceptional)
\begin{align*}\int_X c_1(-K_X)\wedge F^*\omega^{k-1}& =
\int_X c_1(-\tau^*K_N-E)\wedge \tau^*f^*\omega^{k-1}\\ &=
\int_N c_1(-K_N)\wedge f^*\omega^{k-1},
\end{align*}
both of which are semi-positive since $D$ is effective 
and $-K_N$ pseudo-effective. This forces all of the above
integrals to vanish. In particular, $\omega_{00}$ is flat on $N_{00}$
and therefore, since $\Ric(h)=\Ric(\omega_{00})=0$ on $N_{00}$,
$\Ric(h)=0$ on $X$. We then have, 
$-K_N$ being pseudo-effective, that for a generic curve $C$ cut
out by hyperplanes on $N$:
$$0\leq D.\tau^*C=K_N.C\leq 0,$$
forcing equality. Hence $D$ is $\tau$-exceptional and
$s$ is nowhere zero on $N_{00}$. By \cite[Ch. I, $\mathsection$ 4, Prop. 3]{Kleiman}, it also follows that $K_N$ is numerically trivial.\par
Finally, to see that $f$ is totally geodesic along $N_{sm}$, we argue as follows. Since the
exceptional divisor $E$ of $\tau$ is rationally connected, the condition $c_1(L)=0$ 
implies that $L$ is trivial on $E$ and that its inclusion into the 
trivial bundle $F^{-1}(\Lambda^k TM)|_{E}$ is constant. Hence
the subbundle $L$ of $F^{-1}(\Lambda^k TM)$ is the pullback
of a subbundle $\tilde L$ of $f^{-1}(\Lambda^k TM)$ on $N$. This
means that the inclusion of $K_X^\vee$ in $L$ 
over $\tau^{-1}(N_{sm})$ factors through the 
inclusion $\tilde s$ of $K_{N_{sm}}^\vee$ in $\tilde L|_{N_{sm}}$ 
given by the section 
$\det(df_{sm})$ of $Hom(K_{N_{sm}}^\vee, f_{sm}^{-1}(\Lambda^k TM))$, where $f_{sm}=f|_{N_{sm}}$. Since $\tilde s=s$ on $N_{00}$, where we have shown it is nowhere zero,
and since the complement of $N_{00}$ in $N_{sm}$
has codimension two or higher, 
the Cartier
divisor $(\tilde s|_{N_{sm}})$ must be zero on $N_{sm}$ and hence
$\det(df)$ is nowhere zero on $N_{sm}$. We may now conclude
that $f|_{N_{sm}}$ is a totally geodesic immersion as before.
\end{proof}

\begin{remark}
The above proposition generalizes Proposition~\ref{incompatibility_thm}
and Lemma~\ref{Shwarz}. Observe that the case of 
$\kod(M)\geq n-2$ and $r_M=n$ can be obtained directly 
just from Lemma~\ref{Shwarz} since Kodaira 
dimension zero minimal surfaces are dominated either
by an abelian surface or by families of elliptic curves by
\cite{MM}.
\end{remark}

\begin{remark}
Although we do not need it in this paper, with a little
further work, $N$ can be shown to be smooth. Also,
it is clear from the above proof that the projectivity assumption on $M$ 
is unnecessary and the singularity assumption on $N$ is made only to guarantee that $K_N$ is $\QQ$-Cartier if $f$ is a morphism. Implications for $M$ and its pluricanonical systems will be discussed elsewhere.
\end{remark}

\section{Structural decomposition theorem in dimension two}\label{structure}
In this section, we prove Theorem \ref{structure_thm_surface}. Under its assumptions, the nef fibration is known to be a morphism given by a pluricanonical map, which we take to be the Kodaira-Iitaka map. Out of the three possibilities for the Kodaira dimension, the case $\kod (M) = 1$ is the key one to treat. In this case, we denote the Kodaira-Iitaka map by $\pi:M\to Y$, where $Y$ is a smooth curve. Now, $\pi$ induces an orbifold structure on $Y$ by assigning the
multiplicity $m(q)$ to a point $q$ on $Y$ given by 
the multiplicity
of the generic fiber of $\pi$ above $q$. The orbifold $Y^\partial$
so endowed has the canonical $\QQ$-divisor 
$K_{Y^\partial}=K_Y+\sum_i (1-\frac{1}{m(q_i)})q_i$ following
the notation of \cite{Lu02}.
\begin{proof}[Proof of Theorem \ref{structure_thm_surface}]
It follows from Theorem \ref{mthm} and the validity of the Abundance Conjecture on surfaces that $\kod (M) \geq 0$. In case $\kod (M) = 0$, Theorem~\ref{mthm} states that $M$ has a finite unramified cover by an abelian surface, so there is nothing to prove. Also, in case $\kod (M) = 2$, we have already seen that $K_M$ is ample. So it remains to treat the case $\kod (M) = 1$.\par
Under the present assumptions, $\pi$ is an elliptic fibration whose only degenerate fibers are
multiples of elliptic curves, as any other type of degenerate fiber would contain rational curves (see \cite[p. 150, Table 3. Kodaira's table of singular elliptic fibers]{BPV}).
It follows that the canonical divisor $K_M$ is the pullback of the
orbifold canonical $\QQ$-divisor $K_{Y^\partial}$
of the base, see for example \cite{BL}. 
This means that $K_{Y^\partial}$ is ample and that ${Y^\partial}$ is
the quotient of the unit disk by a Fuchsian group.
As such a group has finite index torsion free subgroups 
by Selberg's lemma (see \cite{JR}), there is a finite 
cover of $Y$ by a smooth curve $C$ of genus at least two branched over $Y$ to precisely the
same multiplicity as that given by $\pi$. After the base change to $C$ and a normalization, we obtain a fibration over $C$ whose total space $\tilde M$ is an unramified covering of $M$ and a holomorphic fiber bundle over $C$ (the 
$j$-invariant of the fibers gives rise to a holomorphic map  
$j:C\to\CC$ and this forces $j$ to be constant). Replacing $C$
by a finite unramified covering if necessary, we then 
have $\tilde M=C\times F$
where $F$ is an elliptic curve and $\tilde M$ is a finite unramified
covering of $M$. The existence of such an unramified cover of $C$ is proven in \cite[Prop. VI.8]{Bea96} based on the existence 
and the affineness of the fine moduli spaces
for elliptic curves with level structure.\par

Proposition~\ref{metric_dec}
now shows that the pull-back metric $\tilde g$ on
$\tilde M=C\times F$ of $g$
is the product of a flat metric on $F$
with a metric $g_C$ on $C$ up to adding a term 
corresponding to a $(1,1)$-form of the form
$\sum_i (p_1^*\mu_i\wedge \overline{p_2^*\nu_i}
+\overline{p_1^*\mu_i}\wedge p_2^*\nu_i)$ where $p_1$, $p_2$ are
the projections and $\mu_i$, $\nu_i$ holomorphic one forms
on $C$ and $F$, respectively. But this additional term vanishes
if we pull back by a constant section of $p_1$ (a fiber of $p_2$) 
so that $g_C$ corresponds to the induced
metric by  $\tilde g$ on this fiber. By the curvature decreasing
properties on subvarieties, it follows that the holomorphic
sectional curvature of $g_C$ is semi-negative.
\end{proof}
Before we prove Proposition~\ref{metric_dec}, we note that its statement (and proof) bears resemblance to Zheng's theorem in \cite[p.~672]{zheng_annals}, which comes with an assumption of semi-negative {\it bi}\/{}sectional curvature. This assumption is not present in our case, but instead we can invoke the uniqueness of K\"ahler-Einstein metrics in a given K\"ahler class at a crucial point of the proof.
\begin{proof}[Proof of Proposition~\ref{metric_dec}]
Since $\omega$ is K\"ahler, the K\"unneth formula for 
$(1,1)$-cohomology classes and the global $\partial\bar\partial$-lemma
show that there exist a real $C^\infty$ function $\phi$ on $M$,
real $(1,1)$-forms $\omega_F$ on $F$ and $\omega_Y$ on $Y$
such that
$$\omega-p^*\omega_F
-\sqrt{-1}\partial\bar\partial \phi=\pi^*\omega_Y
+\sum_i (\pi^*\mu_i\wedge \overline{p^*\nu_i}
+\overline{\pi^*\mu_i}\wedge p^*\nu_i)$$
for holomorphic one forms $\nu_i$ on $F$ and $\mu_i$ on $Y$. Now, the right hand side pulls back to zero by
each constant section $s_y:F\to M$ since it factors 
through the inclusion $i_y:M_y\hookrightarrow M$
of the fiber $M_y=\pi^{-1}(y)$. It follows that 
$s_y^*\omega$ is cohomologous to $\omega_F$.
Since the former is Einstein, by the uniqueness 
of such $(1,1)$-forms in a given cohomology class
(\cite{Calabi}, \cite{Yau}) we have
$i_y^*\sqrt{-1}\partial\bar\partial \phi=0$ 
 for all $y\in Y$ if we choose
$\omega_F$ to be this unique K\"ahler form. 
With this choice, $\phi$ is a harmonic function 
on $M_y$ and therefore constant on $M_y$ for all $y\in Y$.
This means that $\phi$ is a function of $y$ only and thus
the term $\sqrt{-1}\partial\bar\partial \phi$ above may be
absorbed into $\omega_Y$. The $(1,1)$-form so formed on $Y$
is necessarily a K\"ahler form since its pullback to a
horizontal fiber (a fiber of $p$) is 
also the pullback of the K\"ahler form $\omega$.
\end{proof}

\begin{remark} A brief computation shows that if $\omega$
pulls back to flat metrics on the fibers of $\pi$, then these
fibers with the induced metric are totally geodesic. It should also be noted that the projectivity assumption in Proposition \ref{metric_dec} is merely made to make the proposition appear in line with the overall setting of the paper. Its proof does not use it and in fact works for K\"ahler manifolds.
\end{remark}

\begin{remark}
After this paper had been completed and gone to press at the Journal of Differential Geometry, Wu and Yau gave a proof of Conjecture \ref{yau_conj} in \cite{wu_yau}. Subsequently, Tosatti and Yang in \cite{tosatti_yang} extended that proof to the K\"ahler case. However, since these new proofs, which are based  on the complex Monge-Amp\`ere equation and a refined Schwarz Lemma, seem to require strict negativity of the holomorphic sectional curvature, the relationship of these works with our results regarding semi-negative holomorphic sectional curvature remains unclear. 
\end{remark}

\newcommand{\etalchar}[1]{$^{#1}$}


\begin{thebibliography}{BPVdV84}

\bibitem[Abh56]{Abh}
S.~Abhyankar.
\newblock On the valuations centered in a local domain.
\newblock {\em Amer. J. Math.}, 78:321--348, 1956. MR0082477, Zbl 0074.26301.

\bibitem[BCE{\etalchar{+}}02]{8aut}
Th. Bauer, F.~Campana, Th. Eckl, S.~Kebekus, Th. Peternell, S.~Rams,
  T.~Szemberg, and L.~Wotzlaw.
\newblock A reduction map for nef line bundles.
\newblock In {\em Complex geometry ({G}\"ottingen, 2000)}, pages 27--36.
  Springer, Berlin, 2002. MR1922095, Zbl 1054.14019.

\bibitem[Bea96]{Bea96}
A.~Beauville.
\newblock {\em Complex algebraic surfaces}, volume~34 of {\em London
  Mathematical Society Student Texts}.
\newblock Cambridge University Press, Cambridge, second edition, 1996.
\newblock Translated from the 1978 French original by R. Barlow, with
  assistance from N. I. Shepherd-Barron and M. Reid. MR1406314, Zbl 0849.14014.  

\bibitem[Ber66]{Berger}
M.~Berger.
\newblock Sur les vari\'et\'es d'{E}instein compactes.
\newblock In {\em Comptes {R}endus de la {III}e {R}\'eunion du {G}roupement des
  {M}ath\'ematiciens d'{E}xpression {L}atine ({N}amur, 1965)}, pages 35--55.
  Librairie Universitaire, Louvain, 1966. MR0238226, Zbl 0178.56001.

\bibitem[Bir09]{Birkar}
C.~Birkar.
\newblock The {I}itaka conjecture {$C_{n,m}$} in dimension six.
\newblock {\em Compos. Math.}, 145(6):1442--1446, 2009. MR2575089, Zbl 1186.14015.

\bibitem[BL00]{BL}
G.~Buzzard and S.~Lu.
\newblock Algebraic surfaces holomorphically dominable by {$\bold C^2$}.
\newblock {\em Invent. Math.}, 139(3):617--659, 2000. MR1738063, Zbl 0967.14025.

\bibitem[BPVdV84]{BPV}
W.~Barth, C.~Peters, and A.~Van~de Ven.
\newblock {\em Compact complex surfaces}, volume~4 of {\em Ergebnisse der
  Mathematik und ihrer Grenzgebiete (3) [Results in Mathematics and Related
  Areas (3)]}.
\newblock Springer-Verlag, Berlin, 1984. MR0749574, Zbl 0718.14023.

\bibitem[Cal57]{Calabi}
E.~Calabi.
\newblock On {K}\"ahler manifolds with vanishing canonical class.
\newblock In {\em Algebraic geometry and topology. {A} symposium in honor of
  {S}. {L}efschetz}, pages 78--89. Princeton University Press, Princeton, N.
  J., 1957. MR0085583, Zbl 0080.15002.

\bibitem[HM13]{Hall_Murphy}
S.~Hall, T.~Murphy.
\newblock Rigidity results for Hermitian-Einstein manifolds.
\newblock {\em {\rm arXiv:1311.6279}}, 2013.

\bibitem[Har77]{Hartshorne}
R.~Hartshorne.
\newblock {\em Algebraic geometry}.
\newblock Springer-Verlag, New York, 1977.
\newblock Graduate Texts in Mathematics, No. 52. MR0463157, Zbl 0367.14001.

\bibitem[HLW10]{heier_lu_wong_mrl}
G.~Heier, S.~Lu, and B.~Wong.
\newblock On the canonical line bundle and negative holomorphic sectional
  curvature.
\newblock {\em Math. Res. Lett.}, 17(6):1101--1110, 2010. MR2729634, Zbl 1233.14005.

\bibitem[Igu54]{Igusa}
J.~Igusa.
\newblock On the structure of a certain class of {K}aehler varieties.
\newblock {\em Amer. J. Math.}, 76:669--678, 1954. MR0063740, Zbl 0058.37901.

\bibitem[Kaw81]{kawamata_81}
Y.~Kawamata.
\newblock Characterization of abelian varieties.
\newblock {\em Compositio Math.}, 43(2):253--276, 1981. MR0622451, Zbl 0471.14022.

\bibitem[Kaw85a]{Kawamata_85}
Y.~Kawamata.
\newblock Minimal models and the {K}odaira dimension of algebraic fiber spaces.
\newblock {\em J. Reine Angew. Math.}, 363:1--46, 1985. MR0814013, Zbl 0589.14014.

\bibitem[Kaw85b]{Kawamata_85_pluri_sys}
Y.~Kawamata.
\newblock Pluricanonical systems on minimal algebraic varieties.
\newblock {\em Invent. Math.}, 79(3):567--588, 1985. MR0782236, Zbl 0593.14010.

\bibitem[Kle66]{Kleiman}
S.~Kleiman.
\newblock Toward a numerical theory of ampleness.
\newblock {\em Ann. of Math. (2)}, 84:293--344, 1966. MR0206009, Zbl 0146.17001.

\bibitem[KM98]{KM}
J.~Koll{\'a}r and S.~Mori.
\newblock {\em Birational geometry of algebraic varieties}, volume 134 of {\em
  Cambridge Tracts in Mathematics}.
\newblock Cambridge University Press, Cambridge, 1998.
\newblock With the collaboration of C. H. Clemens and A. Corti, Translated from
  the 1998 Japanese original. MR1658959, Zbl 0926.14003.

\bibitem[KN69]{kobayashi_nomizu_ii}
S.~Kobayashi and K.~Nomizu.
\newblock {\em Foundations of differential geometry. {V}ol. {II}}.
\newblock Interscience Tracts in Pure and Applied Mathematics, No. 15 Vol. II.
  Interscience Publishers John Wiley \& Sons, Inc., New York-London-Sydney,
  1969. MR0238225, Zbl 0175.48504.

\bibitem[KV80]{Kawamata_Viehweg_char_ab_var}
Y.~Kawamata and E.~Viehweg.
\newblock On a characterization of an abelian variety in the classification
  theory of algebraic varieties.
\newblock {\em Compositio Math.}, 41(3):355--359, 1980. MR0589087, Zbl 0417.14033.

\bibitem[Laz04]{Laz_I}
R.~Lazarsfeld.
\newblock {\em Positivity in algebraic geometry. {I}}, volume~48 of {\em
  Ergebnisse der Mathematik und ihrer Grenzgebiete. 3. Folge.}
\newblock Springer-Verlag, Berlin, 2004. MR2095471, Zbl 1093.14501.

\bibitem[Liu14]{liu}
G.~Liu.
\newblock Compact {K}\"ahler manifolds with nonpositive bisectional curvature.
\newblock {\em Geom. Funct. Anal.}, 24(5): 1591--1607, 2014. MR3261635.

\bibitem[Lu02]{Lu02}
S.~Lu.
\newblock A refined {K}odaira dimension and its canonical fibration.
\newblock {\em {\rm arXiv:math/ 0211029v3}}, 2002.

\bibitem[MM83]{MM}
S.~Mori and S.~Mukai.
\newblock The uniruledness of the moduli space of curves of genus {$11$}.
\newblock In {\em Algebraic geometry ({T}okyo/{K}yoto, 1982)}, volume 1016 of
  {\em Lecture Notes in Math.}, pages 334--353. Springer, Berlin, 1983. MR0726433, Zbl 0557.14015.

\bibitem[MP97]{Miyaoka_Peternell_book}
Y.~Miyaoka and Th. Peternell.
\newblock {\em Geometry of higher-dimensional algebraic varieties}, volume~26
  of {\em DMV Seminar}.
\newblock Birkh\"auser Verlag, Basel, 1997. MR1468476, Zbl 0865.14018.

\bibitem[Pet91]{peternell}
Th.~Peternell.
\newblock Calabi-{Y}au manifolds and a conjecture of {K}obayashi.
\newblock {\em Math. Z.}, 207(2):305--318, 1991. MR1109668, Zbl 0735.14028.

\bibitem[Rat06]{JR}
J.~Ratcliffe.
\newblock {\em Foundations of hyperbolic manifolds}, volume 149 of {\em
  Graduate Texts in Mathematics}.
\newblock Springer, New York, second edition, 2006. MR2249478, Zbl 1106.51009.

\bibitem[Roy80]{Royden_80}
H.~Royden.
\newblock The {A}hlfors-{S}chwarz lemma in several complex variables.
\newblock {\em Comment. Math. Helv.}, 55(4):547--558, 1980. MR0604712, Zbl 0484.53053.

\bibitem[Shi71]{Shiffman_ext}
B.~Shiffman.
\newblock Extension of holomorphic maps into hermitian manifolds.
\newblock {\em Math. Ann.}, 194:249--258, 1971. MR0291507, Zbl 0213.36001.

\bibitem[Siu11]{Siu_AC}
Y.-T. Siu.
\newblock Abundance conjecture.
\newblock In {\em Geometry and analysis. {N}o. 2}, volume~18 of {\em Adv. Lect.
  Math. (ALM)}, pages 271--317. Int. Press, Somerville, MA, 2011. MR2882447, Zbl 1263.14018.

\bibitem[Tsu00]{Tsuji}
H.~Tsuji.
\newblock Numerically trivial fibrations.
\newblock {\em {\rm arXiv:math.AG/0001023}}, 2000.

\bibitem[TY15]{tosatti_yang}
V.~Tosatti, X.~Yang.
\newblock An extension of a theorem of Wu-Yau.
\newblock {\em {\rm arXiv:1506.01145v2}}, 2015.

\bibitem[Uen75]{Ueno}
K.~Ueno.
\newblock {\em Classification theory of algebraic varieties and compact complex
  spaces}.
\newblock Lecture Notes in Mathematics, Vol. 439. Springer-Verlag, Berlin,
  1975.
\newblock Notes written in collaboration with P. Cherenack. 
MR0506253, Zbl 0299.14007.

\bibitem[Wu73]{Wu}
H.~Wu.
\newblock A remark on holomorphic sectional curvature.
\newblock {\em Indiana Univ. Math. J.}, 22:1103--1108, 1972/73. MR0315642, Zbl 0265.53055.

\bibitem[WWY12]{wong_wu_yau}
P.-M.~Wong, D.~Wu, and S.-T. Yau.
\newblock Picard number, holomorphic sectional curvature, and ampleness.
\newblock {\em Proc. Amer. Math. Soc.}, 140(2):621--626, 2012. MR2846331, Zbl 1235.32021.

\bibitem[WY15]{wu_yau}
D.~Wu, S.-T.~Yau.
\newblock Negative holomorphic curvature and positive canonical bundle.
\newblock {\em {\rm arXiv:1505.05802}}, 2015.

\bibitem[WZ02]{Wu_Zheng_JDG}
H.~Wu and F.~Zheng.
\newblock Compact {K}\"ahler manifolds with nonpositive bisectional curvature.
\newblock {\em J. Differential Geom.}, 61(2):263--287, 2002. MR1972147, Zbl 1071.53539.

\bibitem[Yau78]{Yau}
S.-T.~Yau.
\newblock On the {R}icci curvature of a compact {K}\"ahler manifold and the
  complex {M}onge-{A}mp\`ere equation. {I}.
\newblock {\em Comm. Pure Appl. Math.}, 31(3):339--411, 1978. MR0480350, Zbl 0369.53059.

\bibitem[Zhe93]{zheng_annals}
F.~Zheng.
\newblock Non-positively curved {K}\"ahler metrics on product manifolds.
\newblock {\em Ann. of Math. (2)}, 137(3):671--673, 1993. MR1217351, Zbl 0779.53045.

\bibitem[Zhe95]{Zheng_surfaces_london}
F.~Zheng.
\newblock On compact {K}\"ahler surfaces with non-positive bisectional
  curvature.
\newblock {\em J. London Math. Soc. (2)}, 51(1):201--208, 1995. MR1310732, Zbl 0815.53076.

\bibitem[Zhe02]{Zheng_helv}
F.~Zheng.
\newblock Kodaira dimensions and hyperbolicity of nonpositively curved compact
  {K}\"ahler manifolds.
\newblock {\em Comment. Math. Helv.}, 77(2):221--234, 2002. MR1915039, Zbl 1010.53054.

\end{thebibliography}
\end{document}